\documentclass[preprint,10pt]{elsarticle}

\usepackage[utf8]{inputenc}
\usepackage{setspace}
\usepackage{lmodern}
\usepackage{amsmath,amssymb}
\usepackage{amsthm}
\theoremstyle{plain}
\usepackage{caption}
\usepackage{hyperref}
\usepackage{cleveref}
\usepackage{enumitem}

\usepackage[a4paper,margin=1in]{geometry}
\newtheorem{assumption}{Assumption}[section]

\crefname{assumption}{assumption}{assumptions}
\Crefname{assumption}{Assumption}{Assumptions}

\newtheorem{proposition}{Proposition}[section] 

\newtheorem{theorem}{Theorem}[section]

\newtheorem{remark}{Remark}[section]

\usepackage{graphicx}
\usepackage{subfigure}
\usepackage{epsf,epsfig}
\usepackage{bm,bbm}
\usepackage{diagbox}
\usepackage{algpseudocode}
\usepackage{algorithmicx, algorithm}
\usepackage{physics}
\usepackage{color}
\usepackage{pxfonts}
\usepackage[font=footnotesize,labelfont=bf]{caption}
\usepackage{mathtools}
\usepackage{hhline}
\usepackage[T1]{fontenc}
\usepackage{placeins}

\usepackage{multirow,dcolumn}
\usepackage{mathrsfs}
\usepackage{verbatim}
\usepackage{comment}

\usepackage{epstopdf}
\usepackage{lineno}

\renewcommand{\arraystretch}{1.5}
\graphicspath{{figures/}}
\journal{Elsevier}

\begin{document}

\begin{frontmatter}
    \title{A persistent-homology-Gaussian prior for solving infinite-dimensional Bayesian inverse scattering problems
    }
    \author[TJU]{Zhiyuan Wang}
     \author[TJU]{Hang Qi}
    \author[TJU]{Xiaofei Guan\corref{cor}}
    \ead{guanxf@tongji.edu.cn}
    \author[UESTC]{Zhiliang Deng\corref{cor}}
    \ead{dengzhl@uestc.edu.cn}
    \author[SJU]{Xiaomei Yang}
    \address[TJU]{School of Mathematical Sciences, Tongji University, Shanghai 200092, China}
    \address[UESTC]{School of Mathematical Sciences, University of Electronic and Science Technology of China, Chengdu 610054, China}
     \address[SJU]{School of Mathematics, Southwest Jiaotong University, Chengdu 610097, China}
    \cortext[cor]{Corresponding Author}    
    
    \begin{abstract}
 Bayesian inference methods have been developed to address inverse problems in function spaces where the unknown parameters are of infinite dimension. However, conventional Gaussian priors remain inadequate for reconstructing discontinuous or sharply varying target functions encountered in practical applications like obstacle reconstruction. Although hybrid priors have emerged as a promising solution, significant challenges remain in developing theoretically rigorous and computationally tractable frameworks in engineering applications. To address these issues, we propose a persistent-homology-Gaussian (PHG) prior for solving the acoustic obstacle scattering inverse problem in the infinite-dimensional Bayesian setting, which combines a weighted persistence-based regularization term with a periodic Gaussian reference measure through a Gibbs tilt. Then, the complex boundary is represented by a log-radial function on the unit circle, so that the reconstruction from far-field data is formulated as a function-space inverse problem. The well-posedness of the resulting posterior measure is established in the Hellinger, total variation, and Wasserstein-\(p\) metrics. Furthermore, the convergence of finite-dimensional posterior approximations is obtained, and posterior sampling is performed by a preconditioned Crank--Nicolson (pCN) method. Numerical experiments show that the proposed PHG prior yields accurate and stable reconstructions under more extensive noisy conditions, providing explicit control of multiscale topological features and better performance compared to other conventional priors.

\end{abstract}

    \begin{keyword}
         Bayesian inverse problems\sep
         acoustic obstacle scattering \sep
          Gaussian measure\sep
         persistent homology \sep
         well-posedness
    \end{keyword}
\end{frontmatter}


\section{Introduction}
Inverse obstacle scattering seeks to recover an unknown scatterer from measurements of wave fields generated by given incident waves. Such problems arise in many important applications, including radar and sonar imaging \cite{borden2002mathematical}, medical ultrasound tomography \cite{kuchment2013radon}, and nondestructive testing \cite{bao2002inverse}. From a mathematical perspective, inverse obstacle scattering is typically nonlinear and ill-posed in the sense of Hadamard: small perturbations in the data may lead to large errors in the reconstructed scatterer unless suitable regularization is imposed \cite{colton1998inverse}. Moreover, since the unknown boundary is naturally described by a function rather than by a finite set of parameters, the reconstruction problem is infinite-dimensional. Classical numerical methods for inverse obstacle scattering include iterative reconstruction schemes such as Newton-type methods \cite{Monch1996} and Landweber iteration \cite{Hettlich1998}, as well as multifrequency continuation and recursive linearization methods for quantitative recovery \cite{Bao2015,doi:10.1137/16M1093562}. Non-iterative approaches include sampling-based methods such as the linear sampling method \cite{Colton1996}, the factorization method \cite{Kirsch1998}, the direct sampling method \cite{Li2013}, and extended sampling strategies for limited-aperture data \cite{Liu2018}. Related inverse scattering methods based on finite-element and reciprocity-gap \cite{monk2007inverse} techniques, as well as multiple-frequency eigenvalue approaches \cite{sun2012eigenvalue}, have also been developed. See also the survey articles \cite{KRESS1995171} for broader reviews. These approaches have achieved considerable success in shape reconstruction. However, they are primarily designed to produce point estimates and generally do not provide the quantification of uncertainty.

Bayesian inversion has emerged as a powerful framework for inverse problems because it offers a unified probabilistic formulation that incorporates observational noise and prior information and yields quantitative uncertainty estimates \cite{stuart2010inverse,kaipio2005statistical}. For the problems of function-valued unknowns, this is particularly natural, as the posterior can be defined directly on an infinite-dimensional parameter space rather than only after discretization \cite{stuart2010inverse,dashti2013bayesian}. Foundational results on posterior well-posedness and stability were established in \cite{stuart2010inverse}, while more recent formulations under weaker assumptions were developed in \cite{latz2020well,latz2023bayesian} (see \cite{sullivan2015introduction,calvetti2018inverse} for more details). For acoustic obstacle reconstruction, Bui-Thanh \cite{bui2014analysis} established an infinite-dimensional Bayesian framework  and derived corresponding finite-dimensional approximations.  
Subsequent work considered more challenging measurement settings, including phaseless far-field data generated by point source waves \cite{Yang_2020,YANG2021114073} and limited-aperture observations \cite{yin2025physics,Yang13102023} . Alternative geometric descriptions, such as point-cloud-based representations of the scatterer, were also explored within Bayesian approaches \cite{palafox2017point}, and related Bayesian formulations were further developed for interior cavity scattering problems \cite{wang2015bayesian}. Furthermore, some computational acceleration strategies for Bayesian inverse problems have also been given, such as adaptive multi-fidelity polynomial chaos surrogates and local-approximation-based Stein variational methods \cite{yan2019adaptive,yan2021stein}. These studies show that Bayesian modeling provides a flexible framework for inverse scattering. They also indicate that once the boundary is treated as a function-valued unknown, reconstruction quality depends crucially on how prior information is encoded.

In function-space Bayesian inversion, the prior plays a central role: it not only incorporates available knowledge about the unknown parameter, but also regularizes the inverse problem and shapes the resulting posterior distribution \cite{stuart2010inverse,bui2014analysis}. Gaussian priors are widely used because they fit naturally into the infinite-dimensional Bayesian framework and are compatible with function-space MCMC methods such as pCN \cite{cotter2013mcmc,cui2016dimension}. However, they tend to over-smooth solutions, making them inadequate for capturing sharp transitions or highly oscillatory boundaries. To better capture limited smoothness and edge-preserving features, a variety of non-Gaussian or hybrid constructions have been introduced, including Besov priors and TV (total variation)-typed Gaussian hybrid priors \cite{dashti2011besov,yao2016tv,lv2020nonlocal}. For inverse problems with unknown interfaces or shapes, Bayesian level-set methods \cite{iglesias2016bayesian} have provided a flexible framework for geometric reconstruction. In acoustic scattering, Bayesian shape reconstruction has been studied on shape spaces with Gaussian priors \cite{bui2014analysis}, and through angular or radial boundary parameterizations equipped with different priors \cite{yang2019bayesian,kuijpers2024wavenumber}. In addition, finite-dimensional prior constructions guided by topological sensitivity analysis have been explored for inverse scattering \cite{carpio2020bayesian}. These priors are mainly designed to encode smoothness, edge information, or coarse geometric features, but they do not explicitly characterize multiscale topological structure. Persistent homology provides a natural framework for characterizing such structure, it can describe the evolution of connected components, the presence of closed loops, and the appearance and disappearance of cavities across different scales \cite{edelsbrunner2008persistent,chazal2021introduction}. Recent work \cite{deng2025bayesian} has shown that persistence-based descriptors can be incorporated into prior construction through hybrid formulations relative to a Gaussian reference measure. Motivated by this idea, we introduce a persistent-homology-Gaussian (PHG) prior for acoustic obstacle reconstruction.

This paper studies two-dimensional acoustic obstacle reconstruction from far-field data. We introduce a PHG prior that captures the topology of the obstacle's radial boundary while retaining a Gaussian reference measure. The proposed PHG prior is different from the approach in \cite{carpio2020bayesian}, which constructs the topological prior using the topological derivative of the cost function. In contrast, the method introduced in this paper captures the topological structure in the unknown scatterer, and can be viewed as a topological generalization of total-variation (TV) priors. The main contributions of the paper can be summarized as follows:

\begin{itemize}

\item The hybrid Persistent-homology-Gaussian (PHG) prior \cite{deng2025bayesian} is developed for inverse scattering problems by incorporating the persistent homology analysis under the Bayesian framework. Moreover, the obstacle geometry is characterized by the periodic radial functions to reduce the dimension of computation, which provides better robustness and flexibility through additional tunable parameters.

\item The theoretical analysis of the hybrid PHG prior is also established, and the well-posedness of the resulting posterior measure in the infinite dimensional setting is derived under several statistical metrics, including the Hellinger, total variation, and Wasserstein-$p$ metrics.

\item  Theoretically, the function space is extended to a generalized topological space, and its finite-dimensional realization is also established. Furthermore, the PHG prior is gradient-free, which preserves the more complex geometric information through the persistent homology of filtrations induced by the shape representation, and TV prior preserves local geometric features through variation-based regularization in $BV$-type spaces.

\item   Several numerical examples using a pCN scheme are presented to demonstrate the effectiveness of the proposed PHG prior. The results show that the PHG prior can improve the reconstruction quality compared to other conventional methods under more extensive noisy conditions.
\end{itemize}

The rest of this paper is organized as follows. In 
~\Cref{se2}, Bayesian inverse scattering problems are given in detail. Then, the construction of PHG prior is obtained in ~\Cref{sec:topo-prior}. In ~\Cref{sec:wellposed}, we derived
some theoretical results of the hybrid PHG prior, including the well-posedness and several important properties. In ~\Cref{se5}, some numerical experiments are presented to demonstrate the effectiveness of the proposed method. Finally, conclusions are given in ~\Cref{se6}.

\section{Bayesian inverse scattering problems}\label{se2}
In this section, we formulate the two-dimensional acoustic inverse obstacle scattering problem and the associated Bayesian framework. We also introduce a periodic Gaussian reference prior and establish the well-posedness of the corresponding posterior measure. This provides the reference measure and the analytical foundation for the PHG prior studied in the next section.

\subsection{Problem setup}
\par 
Let $\Omega \subset \mathbb{R}^2$ be a bounded simply connected sound-soft obstacle with boundary
$
\Gamma := \partial \Omega
$
of class $C^{2,\alpha}$ for some $\alpha \in (0,1]$, and let
$
\Omega^{\mathrm e} := \mathbb{R}^2 \setminus \overline{\Omega}
$
denote the exterior scattering domain.  For an incident plane wave $
u^i(x,d)=e^{i\kappa x\cdot d}$ with propagation direction $ d\in S^1$ and wavenumber $\kappa=\omega/c>0$, the scattered field $u^s=u^s(x,d)$ satisfies the following equations

\begin{subequations}
\begin{align}
&\Delta u^s + \kappa^2 u^s = 0, \quad \text{in} \; \Omega^{\mathrm e},\tag{2.1a}\label{eq:2.1a}\\[4pt]
&u = 0,\quad \text{on} \; \Gamma,\tag{2.1b}\label{eq:2.1b}\\[4pt]
&\lim_{r \to \infty} \sqrt{r}\!\left(\frac{\partial u^s}{\partial r} - i \kappa u^s\right) = 0, && \tag{2.1c}\label{eq:2.1c}
\end{align}
\end{subequations}
where ~\cref{eq:2.1c} denotes the Sommerfeld radiation condition, ensuring outgoing wave behavior at infinity. Eq. (2.1) admits a unique solution, with the scattered field \(u^s\) exhibiting the asymptotic expansion \cite{colton1998inverse}
\begin{equation}\label{eq:2.2}
u^s(x,d) \;=\; 
\frac{e^{i\pi/4}}{\sqrt{8\kappa\pi}}\,
\frac{e^{i\kappa r}}{\sqrt{r}}
\left\{ u^\infty(\hat{x},d) + \mathcal{O}\!\left(\tfrac{1}{r}\right) \right\},
\qquad r := |x| \to \infty.
    \tag{2.2}
\end{equation}
This holds uniformly for all directions $\hat{x} = x/|x|$. The function $u^\infty(\hat{x},d)$ appearing in the expansion is known as the far-field pattern, which can be computed by solving the exterior Dirichlet problem for \(\phi\) on \(\Gamma\) under the following equation
\begin{equation}\label{eq:2-3}
u^\infty(\hat{x}, d) = \frac{e^{-i\pi/4}}{\sqrt{8\pi \kappa}} \int_{\Gamma} (\kappa \nu(y) \cdot \hat{x} + \zeta) e^{-ik\hat{x} \cdot y} \phi(y) ds(y),
\tag{2.3}
\end{equation}
where $\nu$ is the outward unit normal.

Given far-field data \(u^\infty(\hat{x},d)\) for \((\hat{x},d) \in \gamma^{o} \times \gamma^{i} \subset S^{1} \times S^{1}\), where \( \gamma^o \), \( \gamma^i \) denote the sets of observation and incident directions. The inverse scattering problem is to 
recover the shape of the obstacle
$\Omega$. In this work, we focus on the fixed-incidence, full-aperture setting, namely \(\gamma^{i} = \{d_{0}\}\) for some direction \(d_{0}\) and \(\gamma^{o} = S^{1}\). A schematic illustration of the obstacle, the incident aperture, and the observation aperture is shown in Figure~\ref{fig:geometry}.

\begin{figure}[htp]
    \centering
    \includegraphics[width=0.3\linewidth]{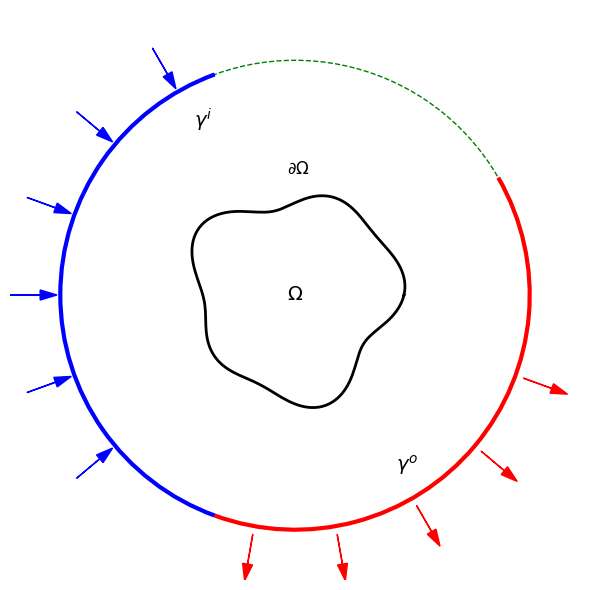}
    \caption{Obstacle $\Omega$, incident aperture $\gamma_i$, observation aperture $\gamma_o$.}
    \label{fig:geometry}
\end{figure}

To parametrize the unknown obstacle, we assume that $\Omega$ is star-shaped with respect to a given reference point $z_0\in\mathbb{R}^2$. Let
 $
e_r(\theta):=(\cos\theta,\sin\theta)^T $, $\theta\in[0,2\pi].
 $
We represent the boundary by
 $$
\Gamma_q := \left\{ z_0 + e^{q(\theta)} e_r(\theta) : \theta\in[0,2\pi] \right\},
\label{eq:boundary-param}
 $$
where $q(\theta)$ is the logarithmic radial function, we will refer to $q(\theta)$ as the log-radial function in the following sections. The corresponding obstacle is
 $$
\Omega_q := \left\{ z_0 + \rho e_r(\theta) : 0\le \rho < e^{q(\theta)},\ \theta\in[0,2\pi] \right\}.
\label{eq:obstacle-param} $$
Let
$
X:=C_{\mathrm{per}}^{2,\alpha}(S^1),
$
which can be defined as
\[
X:=C_{\mathrm{per}}^{2,\alpha}(S^1)=\left\{
q\in C^{2,\alpha}([0,2\pi]) : q^{(j)}(0)=q^{(j)}(2\pi),\ j=0,1,2
\right\}.
\]
Throughout this work, the unknown log-radial function is assumed to belong to $X$. Here, $\Gamma_q$ is a closed $C^{2,\alpha}$ boundary.

\begin{remark}
In classical numerical and theoretical analyses, it is required that the unknown boundary $\Gamma_q$ possesses at least two continuous derivatives with Hölder continuity of order \( \alpha \in (0,1] \) \cite{bui2014analysis,colton1998inverse}. Boundaries with regularity weaker than $C^2$ can still be addressed within the Riesz--Fredholm framework, we omit unnecessary technicalities  here.
\end{remark}

\subsection{Bayesian inversion framework}
In this subsection, the Bayesian inversion framework will be built. Fix the incident direction $d_0\in S^1$ and choose observation directions $\{\hat{x}_k\}_{k=1}^K\subset S^1$. For each $q(\theta)\in X$, let
$
\mathcal{F}(q):=u^\infty(\,\cdot\,,d_0;\Omega_q)
$
denote the corresponding far-field pattern. Here, we assume that the measured data are the magnitudes of the far-field pattern, and therefore define the finite-dimensional observation operator$$
\mathcal{G}(q)
:=
\bigl(
|\mathcal{F}(q)(\hat{x}_1)|,\dots,|\mathcal{F}(q)(\hat{x}_K)|
\bigr)^T
\in \mathbb{R}^K.
\label{eq:observation-operator}
$$
The measurement model is
\begin{equation}
y=\mathcal{G}(q^\dagger)+\zeta.\tag{2.4}\label{eq:data-model}
\end{equation}
where $q^\dagger\in X$ denotes the truth and
$
\zeta\sim \mathcal{N}(0,\Sigma)
$
is a Gaussian random vector with symmetric positive-definite covariance matrix $\Sigma\in\mathbb{R}^{K\times K}$.

Under \eqref{eq:data-model}, the conditional distribution of $y$ given $q$ admits the likelihood
\[
p(y\mid q)\propto \exp\bigl(-\Phi^y(q)\bigr),
\]
where the data misfit functional is defined by
\begin{equation}
\Phi^y(q)
:=
\frac12 \|\mathcal{G}(q)-y\|_{\Sigma}^2
=
\frac12 \|\Sigma^{-1/2}(\mathcal{G}(q)-y)\|_2^2.
\tag{2.5}\label{eq:potential}
\end{equation}
To avoid confusion, the $\Phi^y$ will be simplified as $\Phi$ in the following paper.

Given a prior probability measure $\mu_{\mathrm{pr}}$ on $X$, the corresponding posterior measure $\mu^y$ is formally given by Bayes' rule
\begin{equation}
\frac{d\mu^y}{d\mu_{\mathrm{pr}}}(q)
=
\frac{1}{Z(y)}
\exp\bigl(-\Phi(q)\bigr),
\tag{2.6}\label{eq:bayes-rule}
\end{equation}
where the normalization constant is

$$Z(y):=\int_{X}\exp\bigl(-\Phi(q)\bigr)\,d\mu_{\mathrm{pr}}(q).
\label{eq:normalizing-constant}$$

In this section, we take $\mu_{\mathrm{pr}}=\mu_0$, where $\mu_0$ is a periodic Gaussian reference measure on $X$. The PHG prior will be defined as a weighted hybrid prior with respect to $\mu_0$ in the next section.

\begin{remark}
If the observed data consists of phased far-field measurements, it suffices to replace \eqref{eq:data-model} by
\[
\mathcal{G}(q)
=
\bigl(
\mathcal{F}(q)(\hat{x}_1),\dots,\mathcal{F}(q)(\hat{x}_K)
\bigr)^T,
\]
with the corresponding likelihood defined on the observation space.
\end{remark}

\subsection{Periodic Gaussian prior and well-posedness}
\par Let us consider the observation space  \(Y=\mathbb{R}^K\) and work on the parameter space  \(X=C_{\mathrm{per}}^{2,\alpha}(S^1)\), which we identify with the Banach space of $2\pi$-periodic functions in $C^{2,\alpha}([0,2\pi])$ with Hölder exponent \(\alpha\in(0,1)\). A general approach to ensure \(q(\theta)\in C_{\mathrm{per}}^{2,\alpha}\) is to impose a Gaussian prior on its second derivative (e.g., \(q''\sim\mathcal{GP}(0,(-\Delta)^{-1})\) \cite{bui2014analysis}) with periodic boundary conditions, while we adopt a Gaussian reference measure approach for the function $q$ with a periodic squared-exponential covariance. Let the prior distribution for the log-radial function $q$ be given by \cite{seeger2004gaussian}
\begin{equation*}
q\sim \mathcal{GP}(m(\theta), k(\theta,\theta')),\quad \theta,\theta' \in [0,2\pi],
\end{equation*}
where $\mathcal{GP}$ denotes a Gaussian process with mean function $m(\theta)$ and covariance kernel $k(\theta,\theta')$. For simplicity, we set $m(\theta) \equiv 0$. The periodic squared exponential covariance kernel is given by
\begin{equation}\label{eq:seckernel}
k(\theta,\theta') = \sigma^2 \exp\!\left( 
-\frac{2 \sin^{2}\!\left( \pi |\theta-\theta'|/p \right)}{l^{2}}
\right)\,
\tag{2.7}
\end{equation}
where $\sigma^2 > 0$ controls the variance amplitude, and $\ell > 0$ controls the characteristic length-scale, $p$ is the period length. 
\begin{proposition}[Continuity of higher-order derivatives]
Let \(q\) be the Gaussian random field with periodic squared-exponential kernel \eqref{eq:seckernel}. Then \( q \) belongs to $C_{\mathrm{per}}^{2,\alpha}(S^1)$ almost surely. More precisely, $q''\in C_{\mathrm{per}}^{0,\alpha}(S^1)$ almost surely.

\end{proposition}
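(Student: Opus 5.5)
The plan is to exploit the fact that the periodic squared-exponential kernel \eqref{eq:seckernel} is stationary and real-analytic. With $p=2\pi$ it has a rapidly decaying Fourier expansion, so the field can be written as a random Fourier series whose coefficients decay super-exponentially. Writing $k(\theta,\theta')=\kappa(\theta-\theta')$ with $\kappa(s)=\sigma^2\exp\bigl(-2\sin^2(s/2)/\ell^2\bigr)$, we have
\[
\kappa(s)=\sigma^2 e^{-1/\ell^2}\exp\bigl(\cos s/\ell^2\bigr)=\sigma^2 e^{-1/\ell^2}\Bigl(I_0(\ell^{-2})+2\sum_{n\ge1}I_n(\ell^{-2})\cos(ns)\Bigr),
\]
with $I_n$ the modified Bessel functions. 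The coefficients satisfy $\lambda_n:=\sigma^2e^{-1/\ell^2}I_n(\ell^{-2})\ge 0$ and $\lambda_n\le C\,(2\ell^2)^{-n}/n!$. By the Karhunen--Lo\`eve theorem,
\[
q(\theta)=\sqrt{\lambda_0}\,\xi_0+\sum_{n\ge1}\sqrt{2\lambda_n}\,\bigl(\xi_n\cos n\theta+\eta_n\sin n\theta\bigr),
\]
with i.i.d.\ standard normal $\xi_n,\eta_n$, and the series converges in $L^2(\Omega;L^2(S^1))$.

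Next I will show that this series converges almost surely in $C^3_{\mathrm{per}}(S^1)$, and in fact in every $C^m$. Differentiating termwise $m$ times multiplies the $n$-th term by $n^m$. Since $\max(|\xi_n|,|\eta_n|)\le 2\sqrt{\log n}$ for all large $n$ almost surely (Borel--Cantelli with Gaussian tails), the sup norm of the $m$-times differentiated tail is bounded by $\sum_n n^m\sqrt{\lambda_n}\sqrt{\log n}$. This sum converges because $\sqrt{\lambda_n}$ decays like $(n!)^{-1/2}$. By the Weierstrass M-test, the partial sums converge almost surely uniformly together with all derivatives, so the limit is a $2\pi$-periodic $C^\infty$ function. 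It coincides almost surely with the version of $q$ defined by the covariance, because the two agree as $L^2$ limits. Periodicity of $q,q',q''$ at $0$ and $2\pi$ is automatic from the trigonometric series.

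Finally, $q'''$ is continuous on the compact circle, so $q''$ is Lipschitz. A Lipschitz function is $\alpha$-H\"older for every $\alpha\in(0,1]$, with $[q'']_{\alpha}\le (2\pi)^{1-\alpha}\|q'''\|_\infty$. Hence $q''\in C^{0,\alpha}_{\mathrm{per}}$ and $q\in C^{2,\alpha}_{\mathrm{per}}(S^1)$ almost surely.

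The main obstacle is the period. If $p\neq 2\pi$ the kernel is not $2\pi$-periodic, and the statement only makes sense for $p=2\pi$ (or $p=2\pi/j$), which I will assume. A second point is making the Karhunen--Lo\`eve identification rigorous, meaning that the sample-path version is the one we claim. As a fallback I would use Kolmogorov's criterion directly on $q''$. The process $q''$ is Gaussian and stationary, with covariance $\kappa^{(4)}$, and
\[
\mathbb{E}|q''(\theta)-q''(\theta')|^2=2\bigl(\kappa^{(4)}(0)-\kappa^{(4)}(\theta-\theta')\bigr)\le C|\theta-\theta'|^2
\]
by smoothness of $\kappa$. Gaussian moment equivalence then gives $\mathbb{E}|q''(\theta)-q''(\theta')|^{2r}\le C_r|\theta-\theta'|^{2r}$, and the Kolmogorov--Chentsov theorem yields a modification that is $\beta$-H\"older for all $\beta<1$. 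This covers every $\alpha\in(0,1)$.
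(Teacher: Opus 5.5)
Your proof is correct. Your fallback is essentially the paper's argument, but your main route is genuinely different.

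The paper never expands the kernel. It computes the covariance of $q''$ explicitly as $k^{(2,2)}(\theta,\theta')=k(\theta,\theta')G(z)$, with $G$ a polynomial in $\sin^2 z$. From that formula it derives $\mathbb{E}|q''(\theta)-q''(\theta')|^2\le C|\theta-\theta'|^2$, lifts this to $2m$-th moments by Gaussian moment equivalence, and applies Kolmogorov continuity to get $q''\in C^{0,\alpha}$ for every $\alpha<1$. Your derivation of the quadratic increment bound from the evenness and smoothness of $\kappa^{(4)}$ is a shorter way to reach the same estimate.

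Your main argument instead diagonalizes the kernel through $e^{x\cos s}=I_0(x)+2\sum_{n\ge1}I_n(x)\cos(ns)$ and shows that the random Fourier series converges almost surely in every $C^m$. This gives three things.
\begin{enumerate}
\item A stronger conclusion: $q\in C^\infty$ almost surely, so $\alpha=1$ is included.
\item It removes a step the paper leaves implicit, namely that the Kolmogorov-continuous version of the process with covariance $k^{(2,2)}$ really is the pathwise second derivative of the sample paths of $q$. In your construction the derivatives are taken termwise on a uniformly convergent series.
\item It exposes the period issue. The paper's remark that a periodic kernel gives periodic sample paths only yields period $p$. Already $q(0)=q(2\pi)$ almost surely requires $\mathbb{E}|q(2\pi)-q(0)|^2=2\bigl(k(0,0)-k(0,2\pi)\bigr)=0$, i.e.\ $\sin(2\pi^2/p)=0$, i.e.\ $p=2\pi/j$. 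That is exactly the restriction you impose.
\end{enumerate}

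The paper's route is more general in return. It uses only a local estimate of the covariance near the diagonal and needs no closed-form spectrum, so it transfers to any sufficiently smooth stationary kernel. It also gives H\"older regularity of $q''$ on $[0,2\pi]$ for arbitrary $p$; only the endpoint matching depends on $p$.
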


\begin{proof}
According to the analysis in \cite{seeger2004gaussian}, if \( k(\theta, \theta') \) is the covariance kernel of \( q \) and has mixed fourth derivative, then the covariance kernel of \( q''\) is given by
\[
\text{Cov}(q''(\theta), q''(\theta')) = \frac{\partial^4 k(\theta, \theta')}{\partial \theta^2 \partial \theta'^2} = k^{(2,2)}(\theta, \theta').
\]
Let \( z = \frac{\pi(\theta - \theta')}{p} \), explicit computation yields
\[
k^{(2,2)}(\theta, \theta') = k(\theta, \theta') \left[ A_1 \sin^8\left(z\right) + A_2 \sin^6\left(z\right)+A_3 \sin^4\left(z\right)+A_4 \sin^2\left(z\right) + A_5 \right],
\]
where the constants $A_i$$(i=1:5)$ depend on $p$ and $\ell$, let \[
G(z) = A_1 \sin^8 z + A_2 \sin^6 z + A_3 \sin^4 z + A_4 \sin^2 z + A_5,
\] 
and using $|\sin(z)| < |z|$, we have
\[
|G(z) - A_5| = \left| A_1 \sin^8 z + A_2 \sin^6 z + A_3 \sin^4 z + A_4 \sin^2 z \right|
\leq (|A_1| + |A_2| + |A_3| + |A_4|) z^2 = C_1 |\theta - \theta'|^2.
\]
Moreover, we have
\[
|k(\theta, \theta') - k(\theta,\theta)| = \left| \sigma^2 - \sigma^2\exp\left( -\frac{2\sin^2 z}{l^2} \right) \right|
\leq \frac{2\sigma^2}{l^2} \sin^2 z \leq C_2 |\theta - \theta'|^2.
\]
Based on the above analysis, the following estimate holds
\begin{align*}
& \left| k^{(2,2)}(\theta, \theta') - k^{(2,2)}(\theta,\theta) \right| \\
=\, & \left| k(\theta, \theta') G(z) - k(\theta,\theta) A_5 \right| \\
=\,& \left| k(\theta,\theta) \right| \left| G(z) - A_5 \right| 
+ \left| A_5 \right| \left| k(\theta, \theta') - k(\theta,\theta) \right|
+ \left| k(\theta, \theta') - k(\theta,\theta) \right| \left| G(z) - A_5 \right|\\
\leq\, & C_1 |\theta - \theta'|^2 + |A_5| \cdot C_2 |\theta - \theta'|^2+C_1 C_2|\theta - \theta'|^4\\
\leq & C |\theta - \theta'|^2,
\end{align*}
where $C$ is an explicit constant depending on $p,l$. This gives the second moment estimation
\[
\mathbb{E}|q''(\theta) - q''(\theta')|^2 \leq C |\theta-\theta'|^2.
\]

For any \(m\in \mathbb N\), the increment \(q''(\theta)-q''(\theta')\) is a centered Gaussian random variable. Hence there exists a constant \(C_m>0\) (Corollary 6.8 in \cite{stuart2010inverse}), such that
\[
\mathbb E\bigl|q''(\theta)-q''(\theta')\bigr|^{2m}
\le
C_m\Bigl(\mathbb E\bigl|q''(\theta)-q''(\theta')\bigr|^2\Bigr)^m.
\]
Using the estimate
\[
\mathbb E\bigl|q''(\theta)-q''(\theta')\bigr|^2 \le C|\theta-\theta'|^2,
\]
we obtain
\[
\mathbb E\bigl|q''(\theta)-q''(\theta')\bigr|^{2m}
\le
C_m |\theta-\theta'|^{2m}.
\]

For any \(\alpha<1\), and choose \(m\in\mathbb N\) sufficiently large so that
$
\alpha<1-\frac{1}{2m},
$
the Kolmogorov continuity theorem implies that \(q''\in C^{0,\alpha}(S^1)\) almost surely. Since the covariance kernel \eqref{eq:seckernel} is periodic, the sample paths are periodic, and therefore
$
q\in C_{\mathrm{per}}^{2,\alpha}(S^1)
$ almost surely.
\end{proof}

\begin{proposition}\label{prop:stability}
Let $ \mu_{0}$ be the Gaussian reference measure induced by the periodic kernel \eqref{eq:seckernel}. It follows that $\mu_{0}(X) = 1$ for $0 < \alpha < 1$.
\end{proposition}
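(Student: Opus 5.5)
The plan is to derive $\mu_0(X)=1$ directly from the preceding Proposition on the continuity of higher-order derivatives. The main work is to make precise in which space $\mu_0$ is realized, and to show that $X$ is a measurable set of full measure.

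\textbf{Realizing $\mu_0$ on a Banach space.} The kernel \eqref{eq:seckernel} is real-analytic and $p$-periodic (with $p=2\pi$), so the Kolmogorov construction first gives $\mu_0$ on $\mathbb{R}^{[0,2\pi]}$. The previous proposition's Kolmogorov-continuity argument, applied already at the level of $q$, yields a continuous modification. We therefore regard $\mu_0$ as a centered Gaussian measure on the separable Banach space $C_{\mathrm{per}}(S^1)$ with its Borel $\sigma$-algebra, which coincides with the cylindrical $\sigma$-algebra generated by point evaluations.

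\textbf{Measurability of $X$.} We show that $X=C^{2,\alpha}_{\mathrm{per}}(S^1)$ is a Borel subset of $C_{\mathrm{per}}(S^1)$. The requirement that $q$ be $C^2$ can be expressed through countably many conditions on difference quotients over rational points. The Hölder seminorm condition $\sup_{\theta\neq\theta'}|q''(\theta)-q''(\theta')|/|\theta-\theta'|^\alpha<\infty$ reduces to a supremum over rational pairs, by continuity of $q''$. Hence $X$ is a countable union and intersection of closed sets, and in particular Borel.

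\textbf{Full measure.} Differentiability in mean square follows because $k$ is smooth, with $\mathrm{Cov}(q'',q'')=k^{(2,2)}$ as computed before. Combining this with the Hölder moment bound $\mathbb{E}|q''(\theta)-q''(\theta')|^{2m}\le C_m|\theta-\theta'|^{2m}$, Kolmogorov's theorem gives a modification of $q''$ that lies in $C^{0,\alpha'}$ for every $\alpha'<1$. Integrating twice, the paths of $q$ agree almost surely with a $C^{2,\alpha}$ function. Since both are continuous and agree on the rationals, they coincide everywhere. Periodicity of the derivatives follows from the stationarity and periodicity of $k$: we have $\mathbb{E}|q^{(j)}(0)-q^{(j)}(2\pi)|^2 = 2\bigl(k^{(j,j)}(0,0)-k^{(j,j)}(0,2\pi)\bigr)=0$, so equality holds almost surely for $j=0,1,2$. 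Hence $\mu_0(X)=1$ for every $0<\alpha<1$.

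\textbf{Expected obstacle.} The delicate point is the passage from a modification of $q''$ to a statement about $\mu_0$ as a measure on $C_{\mathrm{per}}$. One must identify the mean-square derivative with the pathwise derivative. I would handle this by showing that the pathwise difference quotients converge in $L^2(\Omega)$, hence almost surely along a subsequence, to the continuous version of $q'$ (and likewise $q''$), uniformly on the rationals via the Kolmogorov–Chentsov bound.
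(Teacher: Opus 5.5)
Your proposal is correct and follows the same route as the paper, which simply deduces $\mu_0(X)=1$ from the preceding proposition's Kolmogorov-continuity bound on $q''$. You additionally supply what that two-line proof omits: Borel measurability of $X$ in $C_{\mathrm{per}}(S^1)$, identification of the continuous modification of the mean-square derivative with the pathwise $q''$ by integrating twice, and the periodicity of $q^{(j)}$, where your normalization $p=2\pi$ (any $p=2\pi/n$ would do) is genuinely needed for $k^{(j,j)}(0,0)=k^{(j,j)}(0,2\pi)$, a point the paper passes over when it asserts that the sample paths are periodic.
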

\begin{proof}
The above analysis shows that $q''$ is almost surely in the space $C_{\mathrm{per}}^{0,\alpha}(S^1)$ for any $0<\alpha<1$ .Then, it yields that $q$ is almost surely in $X=C_{\mathrm{per}}^{2,\alpha}(S^1)$ and $\mu_{0}(X)=1$. 
\end{proof}

Then, we give the well-posedness results of the periodic squared exponential prior \eqref{eq:seckernel}, which mainly based on an analysis of the forward map 
$\mathcal{G}$. Moreover, under additional assumptions, the posterior measure can be well approximated by a finite-dimensional representation (Theorem 4.10 in \cite{stuart2010inverse}).
These results are crucial for establishing the well-posedness proof of the PHG prior in ~\Cref{sec:wellposed}.
\begin{theorem}\label{theo:2.1}
Let $q\sim \mu_{0}$  be given by \eqref{eq:seckernel}, then the following properties hold
\begin{enumerate}[label=(\roman*), leftmargin=2em]
\item The posterior measure $\mu^{y}$ is absolutely continuous with respect to $\mu_{0}$, and the Radon--Nikodym derivative is given by
\begin{equation*}
\frac{d\mu^{y}}{d\mu_{0}}
= \frac{1}{Z}\exp\!\left(-\tfrac{1}{2}\,\|y-\mathcal{G}(q)\|_{\Sigma}^{2}\right),
\end{equation*}
where
\[
Z=\int_{X}\exp\!\left(-\tfrac{1}{2}\,\|y-\mathcal{G}(q)\|_{\Sigma}^{2}\right)\,\mu_{0}(dq).
\]
\item The posterior measure $\mu^y$ is a well-defined probability measure on 
$X=C_{\mathrm{per}}^{2,\alpha}(S^1)$.
\item The posterior measure $\mu^y$ is Lipschitz continuous with respect to the data $y$ in the Hellinger distance.
\end{enumerate}
\end{theorem}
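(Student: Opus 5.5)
The plan is to verify the standard assumptions of Stuart's framework (Assumption 2.6 and Theorems 4.1--4.2 in \cite{stuart2010inverse}) for the potential $\Phi(q)=\frac12\|y-\mathcal{G}(q)\|_\Sigma^2$ on $X=C_{\mathrm{per}}^{2,\alpha}(S^1)$. Once they hold, (i)--(iii) follow from those theorems together with Proposition~\ref{prop:stability}, which gives $\mu_0(X)=1$. The assumptions needed are:
\begin{itemize}
\item[(a)] $\Phi\ge 0$;
\item[(b)] for every $r>0$ there is a constant $K(r)$ with $\Phi(q)\le K(r)$ whenever $\|q\|_X\le r$ and $|y|\le r$;
\item[(c)] $\Phi$ is locally Lipschitz in $q$;
\item[(d)] $|\Phi(q;y_1)-\Phi(q;y_2)|\le C(r)|y_1-y_2|$ for $|y_i|\le r$, with $C(r)$ controlled by $\exp(\varepsilon\|q\|_X^2)$, so that Fernique's theorem makes it integrable against $\mu_0$.
\end{itemize}
Condition (a) is trivial.

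The key analytic input is a bound on the forward map. I would show that $q\mapsto\mathcal{F}(q)$ is bounded on bounded sets of $X$ as a map into $C(S^1)$, and locally Lipschitz (indeed Fréchet differentiable). For $\|q\|_X\le r$, the boundary $\Gamma_q$ lies in the annulus $e^{-r}\le|x-z_0|\le e^{r}$, and its $C^{2,\alpha}$ parametrization has norms bounded in terms of $r$. The combined-layer integral equation behind \eqref{eq:2-3} then has a uniformly bounded inverse on $C(\Gamma_q)$. To justify this, I would pull the operators back to $[0,2\pi]$ via $\theta\mapsto z_0+e^{q(\theta)}e_r(\theta)$ and use the classical smooth dependence of the potential operators on the parametrization, together with compactness and Riesz--Fredholm theory (as in \cite{colton1998inverse} and \cite{bui2014analysis}). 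This gives $|u^\infty(\hat x_k)|\le M(r)$. Domain-derivative results, e.g. Kirsch's shape derivative, give local Lipschitz continuity; taking moduli preserves it, since $\big||a|-|b|\big|\le|a-b|$.

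With these bounds, (b) follows from $\Phi\le \|\Sigma^{-1/2}\|^2(|y|^2+K M(r)^2)$. For (c), I would write
\[
|\Phi(q_1)-\Phi(q_2)|\le \tfrac12\|\Sigma^{-1/2}\|^2\bigl(2|y|+|\mathcal{G}(q_1)|+|\mathcal{G}(q_2)|\bigr)|\mathcal{G}(q_1)-\mathcal{G}(q_2)|.
\]
For (d), I would use $|\Phi(q;y_1)-\Phi(q;y_2)|\le C(|y_1|+|y_2|+|\mathcal{G}(q)|)|y_1-y_2|$. At this point a global bound on $|\mathcal{G}(q)|$ would be ideal. The far-field is bounded in terms of $\|\phi\|$ and the length of $\Gamma_q$, but the inverse-operator bound may grow with $\|q\|_X$. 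I would therefore settle for polynomial or exponential-in-$\|q\|_X$ growth, e.g. $M(r)\le C e^{c r}$, which is still integrable against the Gaussian $\mu_0$ by Fernique.

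Given (a)--(d), the proofs of the three parts go as follows.
\begin{itemize}
\item[(i)] $\Phi$ is continuous on $X$, hence $\mu_0$-measurable. Moreover $Z\ge\int_{\|q\|_X\le 1}e^{-K(1)}\,d\mu_0>0$, since $\mu_0$ charges balls, a consequence of the support properties of the Gaussian measure on the separable closure involved. Also $Z\le 1$. Bayes' theorem then gives the stated Radon--Nikodym density.
\item[(ii)] The posterior $\mu^y$ is a probability measure on $X$ because $\mu_0(X)=1$.
\item[(iii)] Split the Hellinger distance in the standard way into a term controlled by $|Z_1-Z_2|$ and a term controlled by $\int |e^{-\Phi_1/2}-e^{-\Phi_2/2}|^2\,d\mu_0$. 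Both are bounded by $C|y_1-y_2|$ using (d) and the integrability of $C(r)^2$.
\end{itemize}

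The main obstacle is the uniform-in-$r$ control of the boundary integral operator inverse. In particular, one must make sure that the growth of $|\mathcal{G}(q)|$ in $\|q\|_X$ is mild enough to be integrable under $\mu_0$. A second, technical point is that $C^{2,\alpha}$ is nonseparable, so the argument must work on the closure where $\mu_0$ lives.
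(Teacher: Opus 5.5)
Your route is the paper's route. Its proof takes $\mu_0(X)=1$ from Proposition~\ref{prop:stability} and then invokes Assumption A.1 and Theorem A.2 of \cite{bui2014analysis}. Those package Stuart's conditions on $\mathcal G$ (a growth bound $\|\mathcal G(q)\|_\Sigma\le\exp(\varepsilon\|q\|_X^2+M(\varepsilon))$ and local Lipschitz continuity) together with Stuart's well-posedness theorem, so you are unpacking the citation.

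The one step that is not yet an argument is the one you flag yourself. Your condition (d), and with it (iii), rests on a growth estimate $M(r)\le Ce^{cr}$ that you do not prove. Your compactness/Riesz--Fredholm argument (the $C^{2,\alpha}$-ball is compact in $C^2$, and the pulled-back inverse depends continuously on $q$) gives a bound on each ball but no rate in $r$. The paper does not prove such a rate either; it imports it through Assumption A.1.

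You do not actually need it. Put $a_i=\|\mathcal G(q)-y_i\|_\Sigma$, so that $e^{-\Phi(q;y_i)/2}=e^{-a_i^2/4}$ and $e^{-\Phi(q;y_i)}=e^{-a_i^2/2}$. Both $t\mapsto e^{-t^2/4}$ and $t\mapsto e^{-t^2/2}$ are globally Lipschitz on $[0,\infty)$, with constants $(2e)^{-1/2}$ and $e^{-1/2}$. The reverse triangle inequality gives $|a_1-a_2|\le\|y_1-y_2\|_\Sigma$. Hence, uniformly in $q$,
\[
\bigl|e^{-\Phi(q;y_1)/2}-e^{-\Phi(q;y_2)/2}\bigr|\le (2e)^{-1/2}\|y_1-y_2\|_\Sigma,
\qquad
|Z(y_1)-Z(y_2)|\le e^{-1/2}\|y_1-y_2\|_\Sigma .
\]
Both terms of your Hellinger splitting are therefore $O(\|y_1-y_2\|_\Sigma^2)$ as soon as $Z$ is bounded below on $\{|y|\le r\}$. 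That lower bound follows from (b) applied on a ball $B_R$ with $\mu_0(B_R)\ge\tfrac12$. Such an $R$ exists simply because $\|q\|_X<\infty$ holds $\mu_0$-a.s. This also removes your appeal to ``$\mu_0$ charges the unit ball'' and the accompanying separability discussion.

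So for finite-dimensional Gaussian data you need only measurability and local boundedness of $\mathcal G$. Local Lipschitz continuity (c) serves only to give measurability, and the behaviour of the boundary-integral inverse as $\|q\|_X\to\infty$ never enters.
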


\begin{proof}
First, $\mu_{0}(X)=1$ due to Proposition \ref{prop:stability}. Then,
\Cref{theo:2.1} are direct consequences of the fact that satisfies Assumption A.1 in \cite{bui2014analysis} and 
applies Theorem A.2 in \cite{bui2014analysis} . The proof is finished.
\end{proof}
To prepare for the persistence-based prior construction in ~\Cref{sec:topo-prior}, we introduce the generic admissible set on which the one-dimensional persistence pairing is well defined and free of degeneracies.
\begin{equation}\label{eq:Xadm}
X_{\mathrm{adm}}
:=
\{q\in X:\ q \text{ is a periodic Morse function on } S^1
\text{ and its critical values are pairwise distinct}\}.
\tag{2.8}
\end{equation}

\begin{assumption}[Admissibility assumption]\label{ass:adm}
The set \(X_{\mathrm{adm}}\) is Borel measurable and has full \(\mu_0\)-measure
\[
\mu_0(X_{\mathrm{adm}})=1.
\]
\end{assumption}

This assumption is used only to ensure that the persistence-based quantities introduced below are well defined for \(\mu_0\)-almost every realization. For the periodic squared-exponential Gaussian reference prior considered here, it is natural in view of the generic Morse-type nondegeneracy expected for smooth Gaussian sample paths, although we do not pursue the technical proof in the present paper.

\section{Persistent-Homology-Gaussian prior}\label{sec:topo-prior}
The present section introduces the construction of the PHG prior, which is derived from the Gaussian reference measure \eqref{eq:seckernel} with persistence-based regularization for the log-radial function. Firstly, some basic definitions used for PHG prior have been given in \Cref{def:filtration}
and \Cref{def:persist}. Then, the weighted persistence-based penalty are also defined in \Cref{def:toppenal}, and PHG prior has been constructed as a Gibbs tilt of periodic Gaussian prior in \Cref{def:PHG}.

\subsection{Sublevel and lower-star filtrations}
\label{def:filtration}
Let $M$ be a triangulable topological space, and let $f:M\to\mathbb R$ be a continuous tame function. For each threshold $c\in\mathbb R$, the sublevel set is defined by
\[
S_c=\{x\in M\mid f(x)\le c\}.
\]
The family $\{S_c\}_{c\in\mathbb R}$ forms a nested filtration, and persistent homology tracks the birth and death of homological features along this filtration. For tame functions, the homology changes only at finitely many filtration values.

To compute this filtration in practice, the continuous setting is replaced by a simplicial discretization. Specifically, the space $M$ is approximated by a simplicial complex $\mathcal K$ with vertex set $V$, and the function $f$ is specified at the vertices and extended to simplices by the lower-star rule
\[
f(\sigma)=\max_{v\in\sigma} f(v).
\]
This extension induces the lower-star filtration,
\[
\mathcal K_c=\{\sigma\in\mathcal K\mid f(\sigma)\le c\},\qquad c\in\mathbb R,
\]
which provides the natural simplicial counterpart of the continuous sublevel-set filtration \cite{edelsbrunner2008persistent}.

Persistence is then computed by pairing simplices in the filtration. When a simplex $\sigma$ enters the filtration, it may create a new homology class, whereas a later simplex $\tau$ may destroy an existing one. The persistence pair $(\sigma,\tau)$ records the birth and death of the corresponding topological feature, and its persistence is given by
\begin{equation}
   \mathrm{pers}(\sigma,\tau)=f(\tau)-f(\sigma).\tag{3.1}\label{eq:pers}
\end{equation}

The persistence diagram consists of all such birth--death pairs and all essential classes with death time $c=+\infty$. In the sequel, the ambient space is the unit circle \(S^1\), and the log-radial function \(q\) belongs to the periodic Hölder space \(C_{\mathrm{per}}^{2,\alpha}(S^1)\). Since \(X=C_{\mathrm{per}}^{2,\alpha}(S^1)\subset C^0(S^1)\), the sublevel-set filtration is well defined for every \(q\in X\). To define the persistence-based penalty introduced below, we restrict attention to an admissible subset \(X_{\mathrm{adm}}\subset X\) in \eqref{eq:Xadm} on which the one-dimensional pairing structure is unambiguous.
\subsection{Persistence pairing of the discretized log-radial function}\label{def:persist}
In this subsection, we will consider the discrete approximation
\(q_h\) of the log-radial function\(q:S^1\to\mathbb R\), and its associated persistence pairing is obtained by substituting $q_h$ into equation \eqref{eq:pers} in the one-dimensional periodic profile. Here, we view $S^1$ as the interval $[0,2\pi)$ with endpoints identified. Consider an ordered set of grid points $0 \le x_1 < \cdots < x_m < 2\pi$ arranged cyclically on the interval $[0, 2\pi)$, and let $\mathcal{K}_h$ denote the corresponding one-dimensional simplicial complex whose vertices $\{v_i\}_{i=1}^m$ are mapped to the points $\{x_i\}_{i=1}^m$, with edges connecting consecutive vertices $(v_i, v_{i+1})$ for $1 \le i \le m-1$ together with the periodic boundary edge $(v_m, v_1)$. Then, we have \(q_h(v_i)=q(x_i)\) on the vertices. By abuse of notation, \(q_h\) is also presented as the associated periodic piecewise linear interpolant on \(S^1\). To induce the sublevel-set filtration, we assign function values to edges by the maximum rule, where indices are understood cyclically.
\[
q_h((v_i,v_{i+1}))=\max\{q_h(v_i),q_h(v_{i+1})\}.
\]
We describe the generic case in which the sampled nodal values are distinct. The distinctness assumption is imposed here only for clarity of exposition, in the discrete lower-star setting, it can be removed by adopting a deterministic pairing rule for repeated nodal values \cite{zheng2015application}. In this case, the finite persistence pairs relevant here arise from \(H_0\), and we write them as \((x_k,x_\ell)\), where \(x_k\) is the birth location and \(x_\ell\) is the corresponding death location.

\begin{remark}In the periodic setting, a single essential \(H_1\) class is created when the last edge enters the filtration and the loop becomes complete. Its birth time coincides with the terminal filtration value, which is the same filtration value at which the final finite $H_0$ class dies. Thus the essential $H_1$ class carries no additional information beyond the final $H_0$ merge and is omitted from the penalty \eqref{eq:R-def}.
\end{remark}

Assuming that the sampled values \(q(x_i)\) are distinct, each finite \(H_0\) persistence pair is uniquely determined by a birth vertex and a death vertex. If \(x_k\) creates a connected component and this component first merges with an older one at filtration level \(q_h(x_\ell)\), then \((x_k,x_\ell)\) is a discretized persistence pair with lifetime
\[
\mathrm{pers}(x_k,x_\ell)=q_h(x_\ell)-q_h(x_k).
\]
The essential class representing the global minimum, which persists indefinitely in the sublevel filtration, is paired with $c=+\infty$. To obtain a symmetric descriptor, we also consider the superlevel information by applying the same construction to $-q_h$. Let $P_+(q_h)$ collect all finite persistence pairs for $q_h$, and let $P_-(q_h)$ be the corresponding collection for $-q_h$. Define the persistence functional
\begin{equation}
    \|q_h\|_{\text{per}} = \sum_{(v_k, v_\ell) \in P_+(q_h)} |q_h(v_\ell) - q_h(v_k)| + \sum_{(v_k, v_\ell) \in P_-(q_h)} |q_h(v_\ell) - q_h(v_k)|.\tag{3.2}\label{eq:q-per}
\end{equation}

\begin{remark}[Relation to total variation]\label{rem:tv_discrete}
In one dimension, persistence lifetimes provide a natural decomposition of oscillatory variation. If \(x_{i_0},\dots,x_{i_{r-1}}\) denote the discrete extrema of \(q_h\), listed in cyclic order around \(S^1\), and \(x_{i_r}=x_{i_0}\) by cyclic convention, then
\[
TV(q_h)=\sum_{l=0}^{r-1}|q_h(x_{i_{l+1}})-q_h(x_{i_l})|
      =\|q_h\|_{\mathrm{per}}+\max_{i,j}|q_h(v_i)-q_h(v_j)|.
\]
Thus, \(\|q_h\|_{\mathrm{per}}\) captures the oscillatory part of the variation, while the remaining global range accounts for the overall amplitude. This discrete identity serves as an intuitive guide for the continuous construction below.
\end{remark}

\begin{remark}[Relation between \(q\) and its discrete approximation \(q_h\)]
For a periodic function \(q\in C_{\mathrm{per}}^{2,\alpha}(S^1)\), the persistence pairs used in computation are constructed from its mesh-based approximation \(q_h\). The use of this discretization is supported by the stability of persistence diagrams \cite{cohen2005stability}: if \(D(\cdot)\) denotes the persistence diagram of the sublevel-set filtration and \(d_B\) is the bottleneck distance, then
\[
d_B\bigl(D(q),D(q_h)\bigr)\le \|q-q_h\|_\infty.
\]
As the mesh is refined \((h\to 0)\), one has \(\|q-q_h\|_{\infty}\to 0\), and hence \(D(q_h)\to D(q)\) in bottleneck distance. Since \(q_h\) is understood as the periodic piecewise linear interpolant introduced above, its lower-star persistence agrees with the sublevel-set persistence of the corresponding PL function. Thus the mesh-based persistence diagrams provide a stable discrete approximation of the continuous sublevel-set persistence. To pass from diagram convergence to the convergence of the lifetime-based penalty, an additional consistency result for the one-dimensional pairing structure is needed; this will be stated below.
\end{remark}

\subsection{Weighted persistence-based penalty }\label{def:toppenal}
The one-dimensional pairing structure described above gives rise to a family of lifetimes associated with paired local extrema of \(q\).
Short lifetimes typically reflect small-amplitude oscillations, whereas long lifetimes correspond to more dominant features across scales. Motivated by this structure, we introduce a weighted persistence-based regularization by aggregating these lifetimes. 
By Assumption \eqref{ass:adm}, the admissible set \(X_{\mathrm{adm}}\subset X\) has full \(\mu_0\)-measure. We therefore define the persistence-based penalty on \(X_{\mathrm{adm}}\), where the one-dimensional periodic pairing is well defined without ambiguity, and then relate it to its mesh-based approximation used in computation.

For \(q\in X_{\mathrm{adm}}\), let \(P_+(q)\) and \(P_-(q)\) denote the finite \(H_0\) persistence pairs arising from the sublevel filtrations of \(q\) and \(-q\), respectively. To incorporate the global range term identified in Remark~\ref{rem:tv_discrete}, we adjoin the distinguished pair \((x_{\min},x_{\max})\), where \(x_{\min}\) and \(x_{\max}\) denote any locations of a global minimum and a global maximum of \(q\), respectively. Define

\[
\widetilde P(q):=P_{+}(q)\cup P_{-}(q)\cup \{(x_{\min},x_{\max})\}.
\]
For each pair \(p=(x,\tilde x)\in \widetilde P(q)\), define its associated lifetime by
\[
\ell_p(q):=\bigl|q(\tilde x)-q(x)\bigr|.
\]
Motivated by the discrete identity in Remark~\ref{rem:tv_discrete}, we now record the corresponding continuous decomposition.
\begin{proposition}[Relation to total variation in the continuous setting]\label{prop:tv_cont}
Let \(q\in X_{\mathrm{adm}}\), and
$
x_0,\dots,x_{r-1}\in S^1$
be the local extrema of \(q\), listed in cyclic order around \(S^1\), and set \(x_r=x_0\). Then
\[
TV(q)=\int_{S^1}|q'(\theta)|\,d\theta
     =\sum_{i=0}^{r-1}|q(x_{i+1})-q(x_i)|.
\]
Moreover, the one-dimensional persistence pairing described above yields the decomposition
\[
TV(q)=\sum_{p\in \bar P(q)}\ell_p(q),
\]
The discrete non-periodic version of this persistence--TV relation was established in \cite{zheng2015application}. The present one-dimensional periodic continuous setting can be viewed as the corresponding extension; since our subsequent analysis only uses the mesh-based formulation, we do not reproduce the continuous proof here.
\end{proposition}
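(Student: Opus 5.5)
The plan has two parts: the integral identity, which comes from monotonicity between consecutive extrema, and the persistence decomposition, which I would prove by induction on the number of extrema, cancelling one minimum--maximum pair at a time. I read \(\bar P(q)\) as \(\widetilde P(q)\).

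\textbf{First identity.} Since \(q\in X_{\mathrm{adm}}\) is Morse on the compact manifold \(S^1\), its critical points are nondegenerate and hence isolated. There are therefore finitely many, and on \(S^1\) they alternate between local minima and local maxima, so \(r\) is even and \(r\ge 2\). On each arc \([x_i,x_{i+1}]\) the derivative \(q'\) has no zero in the interior, so \(q\) is strictly monotone there and
\[
\int_{x_i}^{x_{i+1}}|q'(\theta)|\,d\theta=|q(x_{i+1})-q(x_i)|.
\]
Summing over the \(r\) arcs, which cover \(S^1\) with overlaps only at endpoints, gives the first formula.

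\textbf{Second identity.} I argue by induction on \(r\), using the structure of the \(H_0\) pairings on the circle.

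\emph{Base case} \(r=2\). There is one minimum and one maximum, and both \(P_+(q)\) and \(P_-(q)\) are empty. The only element of \(\widetilde P(q)\) is \((x_{\min},x_{\max})\), and
\[
TV(q)=2\bigl(q(x_{\max})-q(x_{\min})\bigr).
\]
\emph{Caveat:} this does not equal \(\ell_{(x_{\min},x_{\max})}(q)=q(x_{\max})-q(x_{\min})\). So either the range pair must be counted with weight two, or the intended statement differs from the literal one. The discrete Remark~\ref{rem:tv_discrete} has the same factor-two issue on a circle. I would fix the convention before proceeding, most likely by counting the range term twice (once as the essential sublevel class, once as the essential superlevel class), and prove
\[
TV(q)=\sum_{p\in P_+\cup P_-}\ell_p(q)+2\,\ell_{(x_{\min},x_{\max})}(q).
\]

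\emph{Inductive step} \(r\ge 4\). Among the \(r\) cyclic arcs between consecutive extrema, choose one of minimal height \(|q(x_{i+1})-q(x_i)|\). Distinct critical values make this minimal arc unique, and its endpoints are a local minimum \(a\) and a local maximum \(b\). By the elder rule, \((a,b)\) is a persistence pair in exactly one of \(P_+(q)\) or \(P_-(q)\). For instance, if the neighbouring extremum beyond \(b\) is a minimum lower than \(a\), then the component born at \(a\) dies at \(b\). Minimality of the arc guarantees this configuration in one of the two filtrations.

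Now cancel the pair: modify \(q\) in a small neighbourhood of the arc to a function \(\tilde q\) that is monotone there and still admissible, with \(r-2\) extrema. Total variation drops by exactly \(2|q(b)-q(a)|\). The remaining pairs of \(q\) and \(\tilde q\) coincide in both filtrations, since removing a minimal-persistence pair does not change the elder-rule matching of the others; this can be checked via bottleneck stability \cite{cohen2005stability} or by direct inspection of the merge tree. Global minimum and maximum are unaffected. The induction hypothesis then closes the argument, with the cancelled pair contributing \(2\ell_{(a,b)}\), and the base case fixes the bookkeeping.

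\textbf{The factor two.} This step still needs care. Each pair is counted twice in TV (the up-slope and the down-slope) but appears only once in \(P_+\cup P_-\). I would resolve this by checking that each cancelled pair \((a,b)\) in fact appears once in \(P_+\) and once (as \((b,a')\)-type data) in \(P_-\) with equal lifetime, because on a circle sublevel and superlevel merges are dual. If that duality holds, the literal statement is recovered with the range pair counted appropriately.

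\textbf{Main obstacle.} The hardest step is verifying that cancelling the minimal arc leaves every other pairing unchanged in both filtrations, together with the duality of \(P_+\) and \(P_-\) on \(S^1\). I would attempt it first via the merge-tree description, following the non-periodic argument of \cite{zheng2015application}.
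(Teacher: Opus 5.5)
Your base-case caveat is the crux. The paper gives no proof of this proposition: it only defers to the non-periodic discrete result, so there is no argument to compare with. Moreover, the second identity as stated, with \(\bar P=\widetilde P\) and the range pair counted once, is false on \(S^1\) for every \(q\in X_{\mathrm{adm}}\). As you noticed, Remark~\ref{rem:tv_discrete} has the same defect. The failure is not confined to \(r=2\). For cyclic extremal values \(0,5,2,3,1,10\) one gets \(TV(q)=30\), lifetimes \(\{1,4\}\) in both \(P_+(q)\) and \(P_-(q)\), and range \(10\), so \(\sum_{\widetilde P}\ell_p=20\). The identity that actually holds is the one you wrote,
\[
TV(q)=\sum_{p\in P_+(q)\cup P_-(q)}\ell_p(q)+2\bigl(\max q-\min q\bigr).
\]
Your closing claim that ``the literal statement is recovered'' should therefore go; what your induction can prove is this corrected version. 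The only downstream use, in Theorem~\ref{lem:topo-A2}(ii), needs just \(\sum_{\widetilde P}\ell_p\le TV(q)\), which survives. Your proof of the first identity is fine.

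The genuine gap is in your inductive step. You assert that the minimal-arc pair \((a,b)\) lies in exactly one of \(P_+(q)\), \(P_-(q)\). If that were so, \(TV\) would drop by \(2\ell_{(a,b)}\) while the persistence sum drops by only \(\ell_{(a,b)}\), and the induction would not close. In fact the pair lies in both, and this follows locally with no global duality needed. Let \(b'\) be the other maximum adjacent to \(a\) and \(a'\) the other minimum adjacent to \(b\); these are distinct from \(b\) and \(a\) since \(r\ge4\). Minimality of the arc then gives \(q(a')<q(a)<q(b)<q(b')\). In the sublevel filtration, the component born at \(a\) stays inside the arc between \(b\) and \(b'\) and first meets another component at level \(q(b)\). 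That other component was born no later than \(q(a')<q(a)\), so \((a,b)\in P_+(q)\). The mirror argument for \(-q\) gives \((b,a)\in P_-(q)\). The same inequalities show that \(a\) is not the global minimum and \(b\) is not the global maximum, so the range is unchanged. The ``duality'' of \(P_\pm\) then comes out of the induction rather than being an input to it.

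Two further repairs are needed. First, bottleneck stability cannot show that the other pairs are exactly preserved. The quantity \(\|q-\tilde q\|_\infty\) is comparable to \(\ell_{(a,b)}\), and stability only bounds how far the other diagram points move. Argue directly instead: apart from the extra component born at \(a\), which meets nothing before level \(q(b)\) and is then absorbed, the sublevel sets of \(q\) and \(\tilde q\) have matching components with matching birth times at every level. Hence every other elder-rule decision is unchanged. It is cleanest to induct on the cyclic sequence of extremal values, on which both sides of the identity depend, and skip constructing \(\tilde q\) at all. Second, distinct critical values do not make the minimal arc unique: the values \(0,1,\tfrac12,2,\tfrac32,3\) give two arcs of height \(\tfrac12\). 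Any minimal arc will do, so this is harmless.
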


Whenever the resulting weighted sum is well defined, we define the persistence-based penalty
\begin{equation}\tag{3.3}\label{eq:R-def}
R(q)
:=
\sum_{p\in \widetilde P(q)} \alpha_p(q)\,\ell_p(q).
\end{equation}
A simple lifetime-dependent weighting strategy is
\begin{equation}\tag{3.4}\label{eq:alpha-basic}
\alpha_p(q)
=
\frac{\tau}{1+\beta\,\ell_p(q)}
\qquad \tau>0,\ \beta>0.
\end{equation}
This choice assigns larger per-unit weights to small lifetimes, thereby discouraging minor oscillations, while reducing the relative weight assigned to large lifetimes so as to preserve dominant features.

If an additional ordering of pairs is available, for instance from a merge-tree representation, one may also consider the order-dependent variant
\begin{equation}\tag{3.5}\label{eq:alpha-order}
\alpha_p(q)
=
(\kappa_p+1)\tau\,\frac{1}{1+\beta\,\ell_p(q)}
\end{equation}
where \(\kappa_p\) is the order index associated with the pair \(p\), see \cite{zheng2015application} for related order-based constructions. For the order-weight strategy \eqref{eq:alpha-order}, we assume the order factor is uniformly bounded, which guarantees the local boundedness required by ~\Cref{sec:wellposed}. In all cases, the role of $R(q)$ is to encode prior preference for reconstructions whose
topological variability is controlled through persistence lifetimes.

\begin{proposition}[Consistency of the mesh-based penalty]\label{prop:COMEP}
Let \(q\in X_{\mathrm{adm}}\), and let \(q_h\) be its periodic piecewise linear interpolant on a quasi-uniform mesh. Define the mesh-based penalty \(R_h(q_h)\) from the discrete periodic pairing rule and the lifetime-dependent weight \eqref{eq:alpha-basic}. Then, for sufficiently fine meshes, the discrete pairing of \(q_h\) is combinatorially consistent with the continuous one-dimensional pairing of \(q\), and
\[
R_h(q_h)\to R(q), \qquad h\to 0.
\]
The same conclusion holds for the order-dependent weight \eqref{eq:alpha-order}, provided the associated order index is eventually preserved by the discrete pairing.

\end{proposition}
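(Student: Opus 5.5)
The plan is to exploit the Morse structure of \(q\in X_{\mathrm{adm}}\) together with the \(C^{2,\alpha}\) regularity, and to show that for small \(h\) the extrema of \(q_h\) are in one-to-one correspondence with those of \(q\), with nearby locations and values. The persistence pairs and lifetimes then converge, and so does the weighted sum.

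\textbf{Step 1: finite, isolated extrema.} Since \(q\) is a periodic Morse function on the compact circle, it has finitely many critical points \(c_1,\dots,c_r\), each with \(q''(c_j)\neq 0\), and its critical values are pairwise distinct. Choose \(\delta>0\) with three properties:
\begin{itemize}
\item the intervals \(I_j=(c_j-\delta,c_j+\delta)\) are disjoint;
\item \(|q''|\ge m>0\) on each \(I_j\);
\item \(|q'|\ge \eta>0\) on the complement \(S^1\setminus\bigcup_j I_j\).
\end{itemize}

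\textbf{Step 2: discrete extrema of the interpolant.} The extrema of \(q_h\) are nodal points where the discrete slopes \((q(x_{i+1})-q(x_i))/(x_{i+1}-x_i)\) change sign. By the mean value theorem, each such slope equals \(q'(\xi_i)\) for some \(\xi_i\in(x_i,x_{i+1})\).
\begin{itemize}
\item Outside \(\bigcup_j I_j\), consecutive slopes keep the sign of \(q'\) once \(h<\delta\), so no discrete extrema appear there.
\item Inside \(I_j\), \(q'\) is strictly monotone. Hence the slopes change sign exactly once, giving exactly one discrete extremum \(x^h_j\), of the same type (min or max) as \(c_j\), with \(|x^h_j-c_j|\le h\).
\item Consequently \(|q_h(x^h_j)-q(c_j)|\le \tfrac12\|q''\|_\infty h^2\to0\).
\end{itemize}
In particular, for small \(h\) the discrete nodal values at extrema are distinct and inherit the ordering of the critical values of \(q\). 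This requires \(h\) below half the minimal gap between critical values, which is positive.

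\textbf{Step 3: combinatorial consistency.} In one dimension, the \(H_0\) pairing (elder rule) of the sublevel filtration, for both \(q\) and \(-q\), is determined solely by the cyclic sequence of extrema types together with the relative order of their values. Intermediate monotone nodes create no births and cause no merges, since in the lower-star filtration each such node joins an existing component. Step 2 shows that this combinatorial data coincides for \(q_h\) and \(q\). It follows that:
\begin{itemize}
\item \(P_\pm(q_h)\) corresponds bijectively to \(P_\pm(q)\);
\item the global min/max pair is preserved;
\item each lifetime satisfies \(\ell_p(q_h)\to\ell_p(q)\);
\item if the order index \(\kappa_p\) is a function of this same combinatorial data (or, as assumed, is eventually preserved), it is eventually constant.
\end{itemize}

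\textbf{Step 4: convergence of the penalty.} With \(|\widetilde P|=r/2+\dots\) fixed and finite, \(R_h(q_h)-R(q)\) is a finite sum of terms \(g(\ell_p(q_h))-g(\ell_p(q))\), where \(g(\ell)=\tau\ell/(1+\beta\ell)\) is Lipschitz with constant \(\tau\). Therefore \(|R_h(q_h)-R(q)|\le \tau\sum_p|\ell_p(q_h)-\ell_p(q)|\to0\), and indeed at rate \(O(h^2)\). The order-dependent weights \eqref{eq:alpha-order} are handled identically because \(\kappa_p\) is eventually fixed.

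\textbf{Main obstacle.} The delicate point is Step 3: stating rigorously that the elder-rule pairing depends only on the ordered extremum data, including the periodic wrap-around and the essential class. A second subtlety is that quasi-uniformity is what guarantees that every \(I_j\) eventually contains nodes on both sides of \(c_j\).

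As a consistency check rather than a substitute for this argument, I would use bottleneck stability \(d_B(D(q),D(q_h))\le\|q-q_h\|_\infty\). For small \(h\), the diagram points are separated by more than \(2\|q-q_h\|_\infty\), which forces the matching to be the identity bijection.
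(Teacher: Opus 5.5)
Your proposal is correct and takes essentially the same route as the paper's proof. You isolate the finitely many nondegenerate critical points, apply the mean value theorem to the nodal slopes to put the discrete extrema of \(q_h\) in order-preserving bijection with those of \(q\), transfer the pairing via the elder rule, and conclude by continuity of \(\ell\mapsto\tau\ell/(1+\beta\ell)\) over a finite, eventually fixed set of pairs. Your Taylor bound \(|q(x_j^h)-q(c_j)|\le\tfrac12\|q''\|_\infty h^2\) combined with the Lipschitz bound on the weight also gives an \(O(h^2)\) rate that the paper does not state; two small corrections: the value-ordering requirement is on \(\tfrac12\|q''\|_\infty h^2\) rather than on \(h\) itself, and it is \(h\to0\), not quasi-uniformity, that puts nodes on both sides of each \(c_j\).
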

\begin{proof}
Since \(q_h\) is the periodic piecewise linear interpolant of 
\(q\) on a quasi-uniform mesh and 
\(q\in C_{\mathrm{per}}^{2,\alpha}(S^1)\), standard interpolation estimates imply
\[
\|q_h-q\|_{\infty}\to 0 \qquad \text{as } h\to 0.
\]
Since \(q\in X_{\mathrm{adm}}\), its critical points are isolated and finite in number on \(S^1\). Hence there exist pairwise disjoint neighborhoods \(U_1,\dots,U_N\) of these critical points such that \(q'\) changes sign exactly once in each \(U_j\), while
\[
|q'(\theta)|\ge \eta>0
\qquad\text{for all }\theta\in S^1\setminus \bigcup_{j=1}^N U_j .
\]
By continuity of \(q'\), the sign of \(q'\) is constant on each connected component of \(S^1\setminus \bigcup_j U_j\), and the mean value theorem implies that, for sufficiently fine meshes, the nodal values of \(q_h\) are strictly monotone on each such component. Likewise, on each \(U_j\), the unique sign change of \(q'\) yields exactly one extremum knot of the same type for the sampled profile of \(q_h\). Therefore, for all sufficiently fine meshes, the discrete extrema of \(q_h\) are in one-to-one correspondence with the critical points of \(q\), with the same cyclic order and no additional extrema elsewhere. Since the critical values of \(q\) are pairwise distinct, the corresponding extremal values of \(q_h\) preserve the same strict ordering for all sufficiently small \(h\). In one dimension, the \(0\)-dimensional persistence pairing is determined by the cyclic order of local extrema together with the elder rule \cite{zheng2015application}, hence the sublevel pairing of \(q_h\) agrees combinatorially with the continuous pairing of \(q\). The same argument applies to the superlevel pairing after replacing \(q\) by \(-q\). Thus the augmented pair collections \(\widetilde P_h(q_h)\) and \(\widetilde P(q)\) are in bijection for all sufficiently fine meshes.

Let \(p_h=(x_h,\tilde x_h)\in \widetilde P_h(q_h)\) be the discrete pair corresponding to \(p=(x,\tilde x)\in \widetilde P(q)\). By the above correspondence, \(x_h\) and \(\tilde x_h\) lie in the neighborhoods of \(x\) and \(\tilde x\), respectively, and mesh refinement implies \(x_h\to x\) and \(\tilde x_h\to \tilde x\). Hence, using also \(\|q_h-q\|_{\infty}\to 0\),
\[
\ell_{p_h}(q_h)=|q_h(\tilde x_h)-q_h(x_h)| \to |q(\tilde x)-q(x)|=\ell_p(q).
\]
For the lifetime-dependent weight \eqref{eq:alpha-basic}, the map
$\ell \mapsto \alpha(\ell)\,\ell$
is continuous on \([0,\infty)\). Hence
\[
\alpha\bigl(\ell_{p_h}(q_h)\bigr)\,\ell_{p_h}(q_h)
\;\to\;
\alpha\bigl(\ell_p(q)\bigr)\,\ell_p(q).
\]
Since the number of pairs is finite and eventually constant, summing over the corresponding pairs yields
\[
R_h(q_h)
=
\sum_{p_h\in \bar P_h(q_h)}
\alpha\bigl(\ell_{p_h}(q_h)\bigr)\,\ell_{p_h}(q_h)
\;\to\;
\sum_{p\in \bar P(q)}
\alpha\bigl(\ell_p(q)\bigr)\,\ell_p(q)
=
R(q).
\]

For the order-dependent weight \eqref{eq:alpha-order}, the same conclusion holds provided the associated order index is determined by the pairing structure and is therefore eventually preserved under the above combinatorial consistency.
\end{proof}

\subsection{PHG prior as a Gibbs tilt of periodic Gaussian prior}\label{def:PHG}
In this subsection, we give the formal Gibbs-tilt representation of the PHG prior relative to the periodic Gaussian reference measure and derive the corresponding posterior density. Let \(\mu_0\) denote the Gaussian reference measure on \(X=C_{\mathrm{per}}^{2,\alpha}(S^1)\) introduced in ~\Cref{se2}. By Assumption \ref{ass:adm}, the persistence-based penalty is defined on a full-\(\mu_0\)-measure admissible set \(X_{\mathrm{adm}}\subset X\). By abuse of notation, we denote by \(R\) a measurable extension of the persistence-based penalty from \(X_{\mathrm{adm}}\) to \(X\), the resulting Gibbs-tilted measures are independent of the particular choice of extension, since \(X\setminus X_{\mathrm{adm}}\) is \(\mu_0\)-null. We formally define the PHG prior \(\mu_{\mathrm{pr}}\) as the Gibbs tilt of \(\mu_0\),
\[
\frac{d\mu_{\mathrm{pr}}}{d\mu_0}(q)
=
\frac{1}{Z_R}\exp\bigl(-R(q)\bigr),
\qquad
Z_R:=\int \exp\bigl(-R(q)\bigr)\,d\mu_0(q),
\tag{3.6}\label{eq:gibbs-tilt-prior}
\]
provided that the extension \(R:X\to\mathbb R\) is measurable and that \(0<Z_R<\infty\).

Combining \eqref{eq:gibbs-tilt-prior} with the likelihood potential \(\Phi^y(q)\) from ~\Cref{se2}, Bayes' formula yields the posterior measure
\[
\frac{d\mu^{y}}{d\mu_0}(q)
=
\frac{1}{Z(y)}\exp\bigl(-\Phi^y(q)-R(q)\bigr),
\tag{3.7}\label{eq:gibbs-tilt-posterior}
\]
where
\[
Z(y):=\int \exp\bigl(-\Phi^y(q)-R(q)\bigr)\,d\mu_0(q).
\]

The next section develops a general Gibbs-tilt analysis for posterior measures of the form \eqref{eq:gibbs-tilt-posterior}. We first identify verifiable assumptions on the persistence-based penalty \(R\) that ensure well-posedness and convergence of finite-dimensional approximations, and then verify that the persistence-based penalty \eqref{eq:R-def} satisfies these assumptions in the present setting.

\section{Theoretical analysis of PHG prior}
\label{sec:wellposed}
This section presents the theoretical analysis of the PHG prior constructed in Section~\ref{sec:topo-prior}. We adopt the Gibbs-tilted posterior measure \eqref{eq:gibbs-tilt-posterior} and establish its well-posedness in the sense of \cite{latz2023bayesian}, including (i) stability with respect to perturbations in the data under several standard probability metrics, and (ii) convergence of finite-dimensional approximations. Our analysis proceeds in two steps. First, we formulate a general Gibbs-tilt framework on a full-measure admissible subset \(X_{\mathrm{adm}}\subset X\) and state assumptions on an additional functional \(R\) defined on \(X_{\mathrm{adm}}\) that ensure well-posedness of the posterior. Second, we verify that the persistence-based penalty $R$ defined in \eqref{eq:R-def} 
satisfies these assumptions by establishing its lower boundedness, local boundedness, and continuity on \(X_{\mathrm{adm}}\).

\subsection{Well-posedness under the general Gibbs-tilt framework}
We first establish a well-posedness result for Gibbs-tilted posteriors with an additional regularization functional \(R\). Let \((X,\|\cdot\|_X)\) be the parameter space for the log-radial function \(q\), and let \(\mu_0\) be the Gaussian reference measure introduced in ~\Cref{se2}. Let \(X_{\mathrm{adm}}\subset X\) be a measurable admissible subset satisfying \Cref{ass:adm} , and let $
R:X_{\mathrm{adm}}\to\mathbb R
$ be an additional functional. By abuse of notation, we again denote by \(R\) a measurable extension of this functional to \(X\). We consider the Gibbs-tilted prior and posterior (cf.\ \eqref{eq:gibbs-tilt-prior}--\eqref{eq:gibbs-tilt-posterior}), where
$
Z(y):=\int_X \exp\bigl(-\Phi^y(q)-R(q)\bigr)\,d\mu_0(q)
$
is the normalizing constant. Assumption A.1 in \cite{bui2014analysis} provides the required measurability and continuity properties of
the likelihood potential $\Phi^y$. Hence, the remaining task is to impose mild growth and
regularity conditions on $R$ ensuring that the posterior is well-defined and stable with respect to
perturbations in the data $y$.

Classical Bayesian well-posedness is often formulated as local Lipschitz continuity of the map
$y\mapsto\mu^y$ in the Hellinger distance \cite{stuart2010inverse}. Following the generalized
viewpoint of \cite{latz2023bayesian}, we adopt the notion of $(P,d)$-well-posedness, which provides a
unified route to stability under a broader class of probability metrics. The key additional requirement beyond ~\Cref{se2} is that \(R\) be bounded below, locally bounded on bounded subsets of \(X_{\mathrm{adm}}\), and continuous on \(X_{\mathrm{adm}}\).


\begin{assumption}\label{ass:A2}
Let \(X_{\mathrm{adm}}\subset X\) be the admissible set from Assumption~\eqref{ass:adm}. The functional
$
R:X_{\mathrm{adm}}\to\mathbb R
$
satisfies:
\begin{enumerate}[label=(\roman*), leftmargin=*, widest=(iii)]
\item \textbf{(Lower bound)} \(R\) is bounded from below on \(X_{\mathrm{adm}}\). Without loss of generality, we assume \(R(q)\ge 0\) for all \(q\in X_{\mathrm{adm}}\).
\item \textbf{(Local boundedness)} For every $r>0$, there exists $K=K(r)>0$ such that
$R(q)\le K(r)$ for all $q\in X_{\mathrm{adm}}$ with $\|q\|_X\le r$.
\item \textbf{(Continuity)} $R$ is continuous on $X_{\mathrm{adm}}$.
\end{enumerate}
\end{assumption}





\begin{theorem}\label{thm:wellposed-hybrid}
Suppose Assumption A.1 in \cite{bui2014analysis} holds for the likelihood potential $\Phi^y$ and $R:X_{\mathrm{adm}}\to\mathbb R$ satisfies Assumption~\ref{ass:A2} .
Then the posterior \(\mu^y\) defined by \eqref{eq:gibbs-tilt-posterior} is \((\mathcal P,d)\)-well-posed in the sense of \cite{latz2023bayesian}. In particular, it is well-posed under the weak, Hellinger, and total variation metrics, and also under the Wasserstein-\(p\) metric whenever the corresponding moment conditions hold.

\end{theorem}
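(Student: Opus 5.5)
The plan is to reduce the statement to the abstract well-posedness theorem of \cite{latz2023bayesian} by absorbing $R$ into the reference measure. Define the Gibbs-tilted prior $\mu_{\mathrm{pr}}$ by \eqref{eq:gibbs-tilt-prior} and view the posterior \eqref{eq:gibbs-tilt-posterior} as the Bayesian posterior with prior $\mu_{\mathrm{pr}}$ and potential $\Phi^y$. Equivalently, treat it as a posterior relative to $\mu_0$ with the combined potential $\Psi^y(q):=\Phi^y(q)+R(q)$. Since $X\setminus X_{\mathrm{adm}}$ is $\mu_0$-null by Assumption~\ref{ass:adm}, every integral can be taken over $X_{\mathrm{adm}}$, where Assumption~\ref{ass:A2} applies. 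The choice of measurable extension of $R$ is therefore irrelevant.

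\textbf{Step 1: the tilted prior is a probability measure.} By Assumption~\ref{ass:A2}(i), $\exp(-R)\le 1$, so $Z_R\le 1$. For positivity, pick $r$ with $\mu_0(\{\|q\|_X\le r\})>0$; such $r$ exists since $\mu_0(X)=1$ by Proposition~\ref{prop:stability}. On this ball, Assumption~\ref{ass:A2}(ii) gives $\exp(-R)\ge e^{-K(r)}$, hence $Z_R\ge e^{-K(r)}\mu_0(B_r\cap X_{\mathrm{adm}})>0$. Measurability of $R$ on $X_{\mathrm{adm}}$ follows from continuity (iii), since $X_{\mathrm{adm}}$ is Borel.

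\textbf{Step 2: Latz's conditions for $\Phi^y$ relative to $\mu_{\mathrm{pr}}$.} The framework of \cite{latz2023bayesian} needs three things: measurability of $(q,y)\mapsto\Phi^y(q)$, continuity of $y\mapsto\Phi^y(q)$ for $\mu_{\mathrm{pr}}$-a.e.\ $q$, and a local integrable upper bound on $\exp(-\Phi^y)$ uniform in $y$ on bounded sets, together with $Z(y)>0$. Assumption A.1 of \cite{bui2014analysis} supplies the first two. It also supplies $\Phi^y\ge 0$, since $\Phi^y$ is a quadratic misfit, and local Lipschitz continuity of $\Phi^y$ in $y$, uniformly on bounded sets of $q$ and $y$. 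These properties transfer from $\mu_0$ to $\mu_{\mathrm{pr}}$ because $\mu_{\mathrm{pr}}\ll\mu_0$. The majorant $\exp(-\Phi^y)\le 1$ is trivially $\mu_{\mathrm{pr}}$-integrable. For $Z(y)>0$, on $B_r$ the forward map is bounded by A.1, so $\Phi^y\le C(r,|y|)$ there. This gives $Z(y)\ge e^{-C(r,|y|)-K(r)}\mu_0(B_r)/Z_R>0$, locally uniformly in $y$.

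\textbf{Step 3: conclude for each metric.} With these conditions verified, the main theorem of \cite{latz2023bayesian} gives $(\mathcal P,d)$-well-posedness for the weak topology, Hellinger, and total variation metrics. Existence and uniqueness are immediate from Steps 1--2. For stability, I would write the standard Hellinger estimate
\[
2d_H(\mu^y,\mu^{y'})^2\le \int\bigl(Z(y)^{-1/2}e^{-\Phi^y/2}-Z(y')^{-1/2}e^{-\Phi^{y'}/2}\bigr)^2 e^{-R}\,\frac{d\mu_0}{Z_R}.
\]
Here $e^{-R}\le 1$ means $R$ only helps, and dominated convergence with continuity in $y$ closes the argument. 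Total variation stability then follows from $d_{TV}\le\sqrt2\,d_H$.

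For Wasserstein-$p$, one needs $\int\|q\|_X^p\,d\mu^y<\infty$ locally uniformly in $y$. This holds because $d\mu^y/d\mu_0\le 1/(Z(y)Z_R)$ and, by Fernique's theorem, $\mu_0$ has all moments in $X$. Combining this moment bound with weak continuity yields $W_p$-continuity. The moment part is what the statement calls the corresponding moment conditions.

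The main obstacle I expect is matching the precise hypotheses of \cite{latz2023bayesian}. That framework asks for continuity in the data $y$ rather than in $q$, so Assumption~\ref{ass:A2}(iii) is really only needed for measurability and for the finite-dimensional approximation results later. The delicate part is the quantitative local-uniform lower bound on $Z(y)$, which needs local boundedness of both $R$ and $\Phi^y$ on a ball of positive $\mu_0$ measure. I would verify this bound first.
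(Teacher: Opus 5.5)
Your proposal is correct and follows the same route as the paper. The paper simply asserts that Assumption A.1 of \cite{bui2014analysis}, together with Assumptions~\ref{ass:adm} and~\ref{ass:A2}, places the Gibbs-tilted posterior under \cite[Assumptions 3.5 and 3.10]{latz2023bayesian}, and then invokes \cite[Theorems 3.6 and 3.12]{latz2023bayesian}. You additionally verify $0<Z_R\le 1$, the locally uniform lower bound on $Z(y)$, and the Wasserstein moment condition (via Fernique and $e^{-R}\le 1$), and you rightly note that continuity of $R$ in $q$ is used only for measurability.

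One small tightening: weak convergence plus bounded $p$-th moments does not by itself give $W_p$ convergence. You should state explicitly that the uniform bounds on higher moments of $\mu^{y'}$ for $y'$ near $y$ give uniform integrability of $\|q\|_X^p$, and that this, together with weak (or total variation) convergence, yields $W_p$ continuity.
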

\begin{proof}
Under Assumption A.1 in \cite{bui2014analysis}, the likelihood potential $\Phi^y$ satisfies the regularity
conditions required in \cite{latz2023bayesian}. 
Assumption~\ref{ass:adm} ensures that \(X_{\mathrm{adm}}\) has full \(\mu_0\)-measure, so the Gibbs factor \(\exp(-R(q))\) is defined \(\mu_0\)-almost surely, and any measurable extension of \(R\) from \(X_{\mathrm{adm}}\) to \(X\) yields the same Gibbs-tilted measure. Assumption~\ref{ass:A2} provides the required lower-bound, boundedness, and continuity properties of \(R\) on \(X_{\mathrm{adm}}\). Therefore, the assumptions of \cite[Assumptions 3.5 and 3.10]{latz2023bayesian} apply to
the Gibbs-tilted posterior \eqref{eq:gibbs-tilt-posterior}. The claim follows by
\cite[Theorems 3.6 and 3.12]{latz2023bayesian}.
\end{proof}

\subsection{Finite-dimensional approximation}

For practical computation, we consider a finite-dimensional approximation of the posterior measure
$\mu^y$
\[
\frac{d \mu^y_{N_1, N_2}}{d \mu_0} \propto \exp \left( - \Phi_{N_1}(q;y) - R_{N_2}(q) \right),
\]
where $\Phi_{N_1}$ and $R_{N_2}$ denote suitable approximations of $\Phi$ and $R$, respectively.

\begin{theorem}\label{thm:fd-approx}
Assume that $\Phi$ and $\Phi_{N_1}$ satisfy Assumption A.1(i) in \cite{bui2014analysis} with constants uniform in $N_1$,
and that $R$ and $R_{N_2}$ satisfy Assumption~\ref{ass:A2}(i)--(ii) on \(X_{\mathrm{adm}}\) with constants uniform in $N_2$.
Assume also that for every $\epsilon>0$ there exist sequences $a_{N_1}(\epsilon)\to0$ and
$b_{N_2}(\epsilon)\to0$ such that $\mu_0(X_\epsilon)\ge 1-\epsilon$ for all $N_1,N_2\in\mathbb{N}$, where
\[
X_\epsilon = \left\{ q \in X_{\mathrm{adm}} \,\big|\, |\Phi(q;y) - \Phi_{N_1}(q;y)| \leq a_{N_1}(\epsilon), \;
|R(q) - R_{N_2}(q)| \leq b_{N_2}(\epsilon) \right\}.
\]
Then, as $N_1,N_2\to\infty$, we have convergence $\mu^y_{N_1,N_2}\to\mu^y$ in Hellinger distance.
Consequently, the convergence also holds in total variation and weak (e.g.\ Prokhorov) metrics.
Moreover, Wasserstein-$p$ convergence follows whenever the corresponding $p$-th moment conditions hold.
\end{theorem}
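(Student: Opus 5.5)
The plan is a Hellinger estimate in the style of Theorem 4.6 of \cite{stuart2010inverse}, adapted to the combined potential
\[
V(q):=\Phi(q;y)+R(q),\qquad V_{N}(q):=\Phi_{N_1}(q;y)+R_{N_2}(q),
\]
with $N=(N_1,N_2)$. The weight of $\mu^y$ with respect to $\mu_0$ is $e^{-V}$, and that of $\mu^y_{N_1,N_2}$ is $e^{-V_N}$. The argument has three steps: bound the normalizing constants uniformly, split the Hellinger integral over $X_\epsilon$ and its complement, and pass from Hellinger to the other metrics. It requires no regularity of $q$ beyond what Assumption A.1(i) in \cite{bui2014analysis} and Assumption~\ref{ass:A2}(i)--(ii) already give.

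\emph{Step 1: uniform bounds on $Z(y)$ and $Z_N(y)$.} By Assumption~\ref{ass:A2}(i), $R,R_{N_2}\ge 0$. By the lower bound in Assumption A.1(i), $\Phi,\Phi_{N_1}$ are bounded below by a constant independent of $N_1$. Hence $e^{-V},e^{-V_N}\le C$ uniformly, and $Z,Z_N\le C$. For the lower bound, fix $r$ with $\mu_0(\{\|q\|_X\le r\}\cap X_{\mathrm{adm}})>0$; this is possible because $\mu_0$ is a Radon Gaussian measure on $X$ and $X_{\mathrm{adm}}$ has full measure. On that ball, A.1(i) bounds $\Phi,\Phi_{N_1}$ above uniformly in $N_1$, and Assumption~\ref{ass:A2}(ii) gives $R,R_{N_2}\le K(r)$ uniformly in $N_2$. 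Therefore $Z,Z_N\ge c>0$ uniformly in $N$.

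\emph{Step 2: the Hellinger estimate.} Write
\[
2d_H(\mu^y,\mu^y_N)^2=\int\Bigl(Z^{-1/2}e^{-V/2}-Z_N^{-1/2}e^{-V_N/2}\Bigr)^2d\mu_0\le I_1+I_2,
\]
where $I_1=\tfrac{2}{Z}\int(e^{-V/2}-e^{-V_N/2})^2d\mu_0$ and $I_2=2|Z^{-1/2}-Z_N^{-1/2}|^2Z_N$. Split $I_1$ over $X_\epsilon$ and $X\setminus X_\epsilon$.
\begin{itemize}
\item On $X_\epsilon$, the inequality $|e^{-a}-e^{-b}|\le e^{-\min(a,b)}|a-b|$ and the uniform lower bounds on $V,V_N$ give an integrand bounded by $C(a_{N_1}(\epsilon)+b_{N_2}(\epsilon))^2$.
\item On $X\setminus X_\epsilon$, the integrand is at most $4C$ and $\mu_0(X\setminus X_\epsilon)\le\epsilon$ (recall $X_\epsilon\subset X_{\mathrm{adm}}$ and $\mu_0(X_{\mathrm{adm}})=1$), so this part is at most $C\epsilon$.
\end{itemize}
The same splitting gives $|Z-Z_N|\le C(a_{N_1}+b_{N_2}+\epsilon)$. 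Combined with Step 1 and the local Lipschitz continuity of $t\mapsto t^{-1/2}$ on $[c,C]$, this gives $I_2\le C(a_{N_1}+b_{N_2}+\epsilon)^2$. Consequently
\[
\limsup_{N_1,N_2\to\infty}d_H(\mu^y,\mu^y_N)^2\le C\epsilon,
\]
and since $\epsilon>0$ is arbitrary, $d_H(\mu^y,\mu^y_N)\to 0$.

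\emph{Step 3: other metrics.} Total variation convergence follows from $d_{TV}\le\sqrt2\,d_H$. Weak and Prokhorov convergence follow because the Prokhorov metric is dominated by total variation. For Wasserstein-$p$, assume uniform control of $p$-th moments, for instance $\int\|q\|_X^{p'}d\mu_0<\infty$ for some $p'>p$, which holds by Fernique's theorem. Together with the uniform density bound $e^{-V_N}/Z_N\le C/c$, this gives uniform integrability of $\|q\|^p$ under $\mu^y_N$. Weak convergence plus uniform $p$-th moments then yields $W_p$ convergence, by the standard characterization in Villani, Theorem 6.9.

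The main obstacle is the $\epsilon$-dependence. The rates $a_{N_1}(\epsilon),b_{N_2}(\epsilon)$ may blow up as $\epsilon\to0$, so the limits cannot be interchanged. For this reason I would fix $\epsilon$, let $N\to\infty$ first, and only then let $\epsilon\to 0$, as in Step 2. A secondary technical point is the uniform lower bound $Z_N\ge c$. It needs the upper bounds in A.1(i) and Assumption~\ref{ass:A2}(ii) to hold uniformly in $N$, together with positive $\mu_0$-mass of a ball; this is exactly what the uniform-constant hypotheses of the theorem are there to provide.
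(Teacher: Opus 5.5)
Your proposal is correct and follows the paper's route. The paper gives no details; it only says that \cite[Theorem 2.3]{yao2016tv} can be adapted, and that result rests on the same standard argument you wrote out: uniform two-sided bounds on $Z$ and $Z_{N_1,N_2}$, then splitting the Hellinger integral over $X_\epsilon$ and its complement, with $\epsilon$ held fixed while $N_1,N_2\to\infty$.

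The one real difference is the Wasserstein-$p$ step. The paper simply invokes \cite[Theorems 3.6 and 3.12]{latz2023bayesian} under unspecified moment conditions, whereas you make those conditions concrete through Fernique's theorem and the uniform density bound $e^{-V_N}/Z_N\le C/c$. Also, the constant lower bound on $\Phi,\Phi_{N_1}$ that you attribute to A.1(i) is most directly justified by their least-squares form, which gives $\Phi,\Phi_{N_1}\ge 0$.
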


\begin{remark}
The Hellinger convergence statement can be obtained by adapting \cite[Theorem 2.3]{yao2016tv} to the
present Gibbs-tilt setting on the full-measure admissible set $X_{\mathrm{adm}}$. Standard inequalities imply that convergence in Hellinger distance
yields convergence in total variation, and hence also in weak metrics.
Wasserstein-$p$ convergence can be established by invoking \cite[Theorems 3.6 and 3.12]{latz2023bayesian}
under the appropriate moment conditions, we omit further details.
\end{remark}

\subsection{Well-posedness of PHG prior}
We now verify the lower-bound, local-boundedness, and continuity properties required by Assumption~\ref{ass:A2} on the admissible set \(X_{\mathrm{adm}}\) for the persistence-based penalty \(R\) defined in \eqref{eq:R-def}, with weights given by \eqref{eq:alpha-basic} or \eqref{eq:alpha-order}.


\begin{theorem}
    
\label{lem:topo-A2} 
The persistence-based penalty \(R\) defined in \eqref{eq:R-def} is bounded from below on \(X_{\mathrm{adm}}\). Moreover, for every \(q\in X_{\mathrm{adm}}\), \(R\) is locally bounded and continuous under the lifetime-dependent weight \eqref{eq:alpha-basic}. The same conclusion holds for the order-dependent weight \eqref{eq:alpha-order}, provided the order factor is uniformly bounded on bounded subsets of \(X\) and the associated order index is locally preserved under admissible perturbations.
\end{theorem}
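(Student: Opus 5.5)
The argument treats the three properties of \Cref{ass:A2} in turn, working pair by pair in the augmented collection \(\widetilde P(q)\).

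\textbf{Lower bound.} This is immediate. Every lifetime \(\ell_p(q)=|q(\tilde x)-q(x)|\) is nonnegative. For \eqref{eq:alpha-basic} each weight satisfies \(\alpha_p(q)=\tau/(1+\beta\ell_p(q))>0\), and for \eqref{eq:alpha-order} we have \(\kappa_p+1\ge1\). Hence every summand in \eqref{eq:R-def} is nonnegative and \(R(q)\ge 0\) on \(X_{\mathrm{adm}}\).

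\textbf{Local boundedness.} We bound each summand and then the number of summands. For each pair,
\[
\alpha_p(q)\ell_p(q)=\frac{\tau\,\ell_p(q)}{1+\beta\ell_p(q)}\le \min\Bigl\{\tau\,\ell_p(q),\ \frac{\tau}{\beta}\Bigr\}.
\]
Two routes are available for summing these.

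\begin{itemize}
\item \emph{TV route (preferred).} Use \Cref{prop:tv_cont}, which gives \(\sum_{p}\ell_p(q)=TV(q)\), so that
\[
R(q)\le \tau\,TV(q)=\tau\int_{S^1}|q'|\,d\theta\le 2\pi\tau\|q\|_X\le 2\pi\tau r=:K(r).
\]
\item \emph{Counting route.} Bound \(\ell_p(q)\le 2\|q\|_\infty\) for each pair and count the pairs. The number of pairs is bounded by the number of critical points of \(q\). However, \(\|q\|_X\le r\) alone does not bound the number of critical points, so this route fails uniformly.
\end{itemize}

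The TV route is therefore the one to use. Its only weak point is that \Cref{prop:tv_cont} is stated without proof. If I prefer to avoid it, I would give a direct argument. Each point of \(S^1\) lies in the "range interval" of at most two pairs, one sublevel and one superlevel, apart from the global pair. Pushing the lifetimes through the coarea formula then gives \(\sum_p\ell_p\le C\,TV(q)\) for a small constant \(C\). The order weight \eqref{eq:alpha-order} only multiplies this bound by the assumed uniform bound on \(\kappa_p+1\).

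\textbf{Continuity.} This step is the main obstacle. Fix \(q\in X_{\mathrm{adm}}\) and let \(\tilde q\in X_{\mathrm{adm}}\) with \(\|\tilde q-q\|_X\le\delta\). I will replay the argument of \Cref{prop:COMEP}, replacing the interpolant \(q_h\) by the perturbation \(\tilde q\) and using the \(C^1\)-closeness that the \(X\)-norm supplies.

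\begin{enumerate}
\item Since \(q\) is Morse, its critical points \(c_1,\dots,c_N\) are finite in number and nondegenerate, so \(|q''(c_j)|>0\). Choose disjoint neighbourhoods \(U_j\) on which \(|q''|\ge\eta\), and let \(|q'|\ge\eta\) off \(\bigcup_j U_j\).
\item For \(\delta<\eta/2\), the bounds \(\|\tilde q'-q'\|_\infty<\eta/2\) and \(\|\tilde q''-q''\|_\infty<\eta/2\) give the following: \(\tilde q'\) has no zeros off \(\bigcup_jU_j\), and \(\tilde q'\) is strictly monotone on each \(U_j\) with a sign change there. Hence \(\tilde q\) has exactly one critical point \(\tilde c_j\in U_j\) of the same type as \(c_j\), and these points lie in the same cyclic order.
\item By the implicit function theorem \(\tilde c_j\to c_j\), so \(\tilde q(\tilde c_j)\to q(c_j)\).
\item Since the critical values of \(q\) are pairwise distinct, their strict ordering persists for small \(\delta\).
\item In one dimension the elder-rule pairing depends only on the cyclic order of the extrema and on the ordering of their values. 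Hence \(\widetilde P(\tilde q)\) and \(\widetilde P(q)\) are in canonical bijection, including the global min/max pair.
\item Each lifetime \(\ell_p\) therefore varies continuously, and \(\ell\mapsto\tau\ell/(1+\beta\ell)\) is continuous.
\item The sum is finite with a fixed index set, so \(R(\tilde q)\to R(q)\).
\end{enumerate}

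For \eqref{eq:alpha-order}, the hypothesis that the order index is locally preserved keeps each \(\kappa_p\) fixed near \(q\), and continuity follows in the same way.
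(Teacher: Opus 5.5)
Your proposal is correct and follows the paper's proof essentially step for step: nonnegativity of each summand for the lower bound, $R(q)\le\tau\,TV(q)\le 2\pi\tau r$ via Proposition~\ref{prop:tv_cont} for local boundedness, and persistence of the nondegenerate critical points plus the elder rule for continuity. Your use of the $C^2$ part of the $X$-norm to force a \emph{unique} continued critical point in each $U_j$ is more careful than the paper's appeal to $C^1$-perturbations.

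One caveat on Proposition~\ref{prop:tv_cont}: only the inequality $\sum_{p\in\widetilde P(q)}\ell_p(q)\le TV(q)$ is needed, and that is all you should take from it. On $S^1$ the exact relation is $TV(q)=\sum_{p\in\widetilde P(q)}\ell_p(q)+(\max q-\min q)$; for example, $q=\cos\theta$ gives $TV=4$ but $\sum_p\ell_p=2$. Your coarea fallback delivers that inequality with $C=1$ once the count is done per level $c$ rather than per point of $S^1$. A regular level meets $2k(c)$ points of $S^1$ but lies in only $2k(c)-1$ lifetime intervals, where $k(c)$ is the number of arcs of $\{q\le c\}$.
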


\begin{proof}
\begin{itemize}
  \item[(i)] \emph{Lower bound.} Since each term is non-negative, the assumption is trivially satisfied.

  \item[(ii)]\emph{Boundedness on bounded sets.} We verify the corresponding boundedness estimate. From Proposition~\ref{prop:tv_cont}, we only need to consider the case of the TV regularization term and provide the corresponding proof. Fix \(r>0\), and let \(q\in X_{\mathrm{adm}}\) satisfy \(\|q\|_X\le r\), where \(X=C_{\mathrm{per}}^{2,\alpha}(S^1)\). Then
\[
TV(q)=\int_0^{2\pi}|q'(\theta)|\,d\theta
\le 2\pi \|q'\|_\infty
\le 2\pi \|q\|_X
\le 2\pi r.
\]

For the lifetime-dependent weight \eqref{eq:alpha-basic},
\[
\alpha_p(q)=\tau\frac{1}{1+\beta\ell_p(q)}\le \tau
\qquad \text{for all } p\in \bar P(q).
\]
Hence Proposition~\ref{prop:tv_cont} yields
\[
R(q)\le \tau\sum_{p\in \bar P(q)}\ell_p(q)
      = \tau\,TV(q)
      \le 2\pi\tau r.
\]
Thus \(R\) is bounded on bounded subsets of \(X_{\mathrm{adm}}\).

For the order-dependent weight \eqref{eq:alpha-order}, assume in addition that the order factor is uniformly bounded on bounded subsets of \(X\). Then, for \(\|q\|_X\le r\), there exists a constant \(C_r>0\) such that
\[
\kappa_p+1\le C_r
\qquad \text{for all } p\in \bar P(q).
\]
Consequently,
\[
\alpha_p(q)=(\kappa_p+1)\tau\frac{1}{1+\beta\ell_p(q)}
\le C_r\tau,
\]
and therefore
\[
R(q)\le C_r\tau\sum_{p\in \bar P(q)}\ell_p(q)
      = C_r\tau\,TV(q)
      \le 2\pi C_r\tau r.
\]
Thus \(R\) is also locally bounded in the order-dependent case.
\medskip
\noindent

  \item[(iii)] \emph{Continuity.}
For such a \(q\), write
\[
\bar P(q)=\{p_j=(x_j,\hat x_j)\}_{j=1}^N.
\]

Since \(q\in X_{\mathrm{adm}}\) is a one-dimensional periodic Morse function with pairwise critical values, all its critical points are isolated and admit unique continuations under sufficiently small \(C^1\)-perturbations, with the same type and cyclic order. Since convergence in \(X=C_{\mathrm{per}}^{2,\alpha}(S^1)\) implies convergence in \(C^1(S^1)\), there exists \(\delta>0\) such that for every admissible \(\tilde q\in X_{\mathrm{adm}}\) satisfying
\[
\|\tilde q-q\|_X<\delta,
\]
the local extrema of \(\tilde q\) are in one-to-one correspondence with those of \(q\). Combined with the one-dimensional elder-rule pairing described in \Cref{def:persist}, this implies that
\[
\bar P(\tilde q)=\{\tilde p_j=(\tilde x_j,\hat{\tilde x}_j)\}_{j=1}^N
\]
with the same number of pairs and the same pairing order as \(\bar P(q)\), and
\[
\tilde x_j\to x_j,
\qquad
\hat{\tilde x}_j\to \hat x_j
\qquad \text{as }\|\tilde q-q\|_X\to 0.
\]

For each \(j=1,\dots,N\), define
\[
\ell_{p_j}(q):=|q(\hat x_j)-q(x_j)|,
\qquad
\ell_{\tilde p_j}(\tilde q):=|\tilde q(\hat{\tilde x}_j)-\tilde q(\tilde x_j)|.
\]
Since \(\tilde q\to q\) in \(X\subset C^0(S^1)\), together with
\[
\tilde x_j\to x_j,
\qquad
\hat{\tilde x}_j\to \hat x_j,
\]
we obtain
\[
\ell_{\tilde p_j}(\tilde q)\to \ell_{p_j}(q),
\qquad j=1,\dots,N.
\]

For the lifetime-dependent weight \eqref{eq:alpha-basic}, the map
\[
\ell \mapsto \tau\frac{\ell}{1+\beta\ell},
\qquad \ell\ge 0,
\]
is continuous on \([0,\infty)\). Therefore,
\[
\tau\frac{\ell_{\tilde p_j}(\tilde q)}{1+\beta \ell_{\tilde p_j}(\tilde q)}
\to
\tau\frac{\ell_{p_j}(q)}{1+\beta \ell_{p_j}(q)},
\qquad j=1,\dots,N.
\]
Since the number of pairs is finite and constant for all admissible \(\tilde q\) with \(\|\tilde q-q\|_X<\delta\), summing over \(j\) yields
\[
R(\tilde q)
=
\sum_{j=1}^N
\tau\frac{\ell_{\tilde p_j}(\tilde q)}{1+\beta \ell_{\tilde p_j}(\tilde q)}
\to
\sum_{j=1}^N
\tau\frac{\ell_{p_j}(q)}{1+\beta \ell_{p_j}(q)}
=
R(q).
\]
Hence \(R\) is continuous at \(q\).

For the order-dependent weight \eqref{eq:alpha-order}, if the associated order index is locally preserved for all admissible \(\tilde q\in X_{\mathrm{adm}}\) with \(\|\tilde q-q\|_X<\delta\), then each factor \(\kappa_{p_j}+1\) remains constant for sufficiently small perturbations. The same argument therefore applies, and continuity follows also in this case.
\end{itemize}
\end{proof}

\section{Numerical experiments}\label{se5}
\subsection{Experimental setup}
In this section, we present several numerical examples to illustrate the effectiveness of the proposed method. The wavenumber is chosen as \(k=1\), and the incident wave is taken as a plane wave
\begin{equation*}
u^i(x) = e^{ikx \cdot d}, \quad d \in S^1.
\end{equation*}
\par To avoid the inverse crime, we use different forward solvers for data generation and inversion. Synthetic far-field data are generated by solving the scattering problem with the finite element method equipped with a perfectly matched layer (PML). In the inversion stage, the forward map is evaluated by a Nyström discretization of the boundary integral formulation with coupling parameter \(\xi=k\), and the far-field pattern is computed through equation \eqref{eq:2-3}. Relative additive Gaussian noise is added to the modulus of each observed far-field data as follows
\begin{equation*}
u_{obs}^{\infty}(\hat{x_i},d)
=| u^\infty(\hat{x_i},d)|+
\delta Z_i |u^\infty(\hat{x_i},d)|.
\end{equation*} Here, \(Z_i \overset{\mathrm{i.i.d.}}{\sim}\mathcal N(0,1)\) are independent standard Gaussian random variables, and \(\delta\) denotes the noise level. We collect $N_{\mathrm{obs}}=64$ far-field samples $u_{obs}^{\infty}(\hat{x_i},d)$ at uniformly spaced
observation directions $\hat{x_i}=(\cos\phi_i,\sin\phi_i)$ with
$\phi_i=\tfrac{2\pi(i-1)}{N_{\mathrm{obs}}}$, $i=1,\dots,N_{\mathrm{obs}}$.
The boundary is represented by a radial function $\rho(\theta)=exp(q)$ discretized on the same
$N_{\mathrm{obs}}$-point angular grid. The data vector \(y\in\mathbb R^{N_{\mathrm{obs}}}\) and the discretized parameter vector \((q(\theta_i))_{i=1}^{N_{\mathrm{obs}}}\in\mathbb R^{N_{\mathrm{obs}}}\) are both $N_{\mathrm{obs}}$-dimensional. We consider two representative noise levels, \(\delta=5\%\) and \(\delta=10\%\), to verify the robustness of the method under moderate and relatively high noise. Three obstacles for testing are considered

\begin{table}[h]
    \centering
    \renewcommand{\arraystretch}{1.5}
    \begin{tabular}{l c}
        \hline
        \textbf{Obstacle} & \textbf{Parametrized boundary} \\
        \hline
        Kite & $(x_1, x_2) = (\cos\theta + 0.65\cos 2\theta - 0.65, 1.5\sin\theta)+x_c$ \\
        
        Rectangle & $(x_1, x_2) = (\cos(\theta)/\max(|\cos(\theta)|,|\sin(\theta)|),\sin(\theta)/\max(|\cos(\theta)|,|\sin(\theta)|))+x_c$\\
    5-star & $(x_1, x_2) = ((1+0.3\sin5\theta)\cos\theta , (1+0.3\sin5\theta)\sin\theta)+x_c$ \\
        \hline
    \end{tabular}
    \caption{Parametrized boundary of the object obstacles}
\end{table}

\begin{remark}
For domains with corners, a uniform angular discretization may resolve the corners poorly, especially when their locations are unknown a priori. This makes such examples a useful test for the proposed PHG prior under noisy observations.

\end{remark}
We use the pCN Markov chain Monte Carlo algorithm to implement the sampling process (See \Cref{alg:pcn-MCMC}). Due to different observation apertures, the convergence rate of the MCMC algorithm may vary. Thus, The posterior mean is estimated from the final 5,000 samples of an 8,000-step Markov chain, following a burn-in period of 3,000 iterations (Such as for 5-star when $\delta =10\%$, we use 40000 iterations and a burn-in of 20000 samples). 

\begin{algorithm}[h]
\caption{The pCN MCMC algorithm with PHG prior}
\label{alg:pcn-MCMC}
\begin{algorithmic}[1]
\State \textbf{Inputs:} observed far-field data \(y\), step-size \(\beta_{PCN}\in(0,1)\), iterations \(N\).
\State \textbf{Prior on log-radial function:} \(q \sim \mathcal{GP}(0,k)\) with the periodic kernel \eqref{eq:seckernel}.
\State \textbf{Init:} set \(q^{0}\equiv 0\); define \(\rho^{0}=\exp(q^{0})\).
Compute \(g^{0}=\mathcal{G}(q^{0})\), \(\Phi^{0}=\Phi(q^{0};y)\), and \(R^{0}=R(q^{0})\).
\For{$j=0,1,\dots,N-1$}
  \State \textbf{pCN proposal in \(q\):} draw \(\xi \sim \mathcal{GP}(0,k)\) and set
  \[
    \hat{q}=\sqrt{1-\beta_{PCN}^{2}}\;q^{j}+\beta_{PCN}\,\xi .
  \]
  \State \textbf{Map to radial function:} \(\hat{\rho}=\exp(\hat{q})\).
  \State \textbf{Forward solve:} compute \(\hat{g}=\mathcal{G}(\hat{q})\), \(\hat{\Phi}=\Phi(\hat{q};y)\),
  \(\widehat{R}=R(\hat{q})\).
  \State \textbf{Acceptance probability:}
  \[
		\alpha = \min\left\{ 1, \frac{\exp\left(-\Phi(\hat{q};y)-\lambda R(\hat{q})\right)}{\exp\left(-\Phi(q^j;y)-\lambda R(q^j)\right)}\right\}.
		\]
  \State Draw \(u\sim \mathrm{Uniform}(0,1)\).
  \If{$u\le \alpha$} \State \(q^{j+1}\gets \hat{q}\), \(\rho^{j+1}\gets \hat{\rho}\),
         \(\Phi^{j+1}\gets \hat{\Phi}\), \(R^{j+1}\gets \widehat{R}\).
  \Else \State \(q^{j+1}\gets q^{j}\), \(\rho^{j+1}\gets \rho^{j}\),
         \(\Phi^{j+1}\gets \Phi^{j}\), \(R^{j+1}\gets R^{j}\).
  \EndIf
\EndFor
\end{algorithmic}
\end{algorithm}
\par The true center position $x_c$ is fixed at $(0, 0)$. We set the parameters $\sigma=1$, $\beta=0.01$ and $\tau=3$ for the three domains. In all numerical experiments, the PHG prior is implemented using the lifetime-dependent weight \eqref{eq:alpha-basic}. The order-dependent variant \eqref{eq:alpha-order} is not considered separately, since it reduces to the same form up to a rescaling of the regularization weight when the order factor \(\kappa_p\) is treated as a constant. Here \(\ell\) and \(p\) denote the kernel hyperparameters in \eqref{eq:seckernel}, \(d\) is the incident direction, and \(\lambda_1,\lambda_2\) are the regularization weights for the PHG and TG priors, respectively. The corresponding parameters are summarized in ~\Cref{tab:delta5} and ~\Cref{tab:delta6}.

\subsection{Comparison of different noise level}
In this subsection, we will compare the Gaussian, TV-Gaussian (TG), and PHG priors under different noise levels. Firstly, we show the reconstruction results (see \Cref{fig:kite0.05_compare}--\Cref{fig:5star0.05_compare}) for the kite, rectangle, and 5-star shaped obstacles under noise level \(\delta=5\%\), respectively. In each figure, the first three subfigures display the exact and reconstructed boundaries obtained with the Gaussian, TG, and PHG priors, while the last subfigure shows the normalized discrete \(L^2\) error of the reconstructed radial profile \(\rho=\exp(q)\) over the pCN-MCMC iterations.

Under the noise level $\delta=5\%$, all three priors are able to recover the main boundary shape to some extent. However, the Gaussian prior exhibits larger deviations from the exact boundary, especially in regions with stronger local variations. Both the TG and PHG priors provide improved reconstructions. At this noise level, the PHG prior remains broadly comparable to the TG prior, while yielding slightly smaller normalized discrete \(L^2\) errors in some examples. The improvement is more pronounced for the rectangular and 5-star shaped obstacles, whose boundaries exhibit sharper local variations and more oscillatory components. These results indicate that under this noise level, the PHG prior provides a meaningful improvement over the Gaussian prior and performs competitively with the TG prior.
\renewcommand{\arraystretch}{1.15}
\newcolumntype{C}[1]{>{\centering\arraybackslash}m{#1}}
\begin{table}[t!]
\centering
\begin{tabular}{|C{2cm}|C{2.0cm}|C{2.0cm}|C{2.5cm}|C{2.5cm}|C{2.5cm}|}
\hline
\textbf{$\delta=5\%$} & \textbf{$\ell$} & \textbf{$p$} & $d$ &\textbf{PHG weight $\lambda_1$} & \textbf{TG weight $\lambda_2$} \\
\hline
Kite      & $0.55$ & $2$  &$(1,0)$ & $220$ & $500$ \\

\hline
Rectangle & $1$    & $1$  &$(1,0) $& $15$  & $40$  \\
\hline
5-star     & $0.8$  & $0.4$ & $(1,0)$&$5.5$ & $26$  \\
\hline
\end{tabular}

\captionsetup{justification=centering,singlelinecheck=false}
\hspace{-1cm}
\caption{Reconstruction parameter for three different obstacles when $\delta=5\%$.}
\label{tab:delta5}
\end{table}

\renewcommand{\arraystretch}{1.15}
\newcolumntype{C}[1]{>{\centering\arraybackslash}m{#1}}

\begin{table}[t!]
\centering
\begin{tabular}{|C{2cm}|C{2.0cm}|C{2.0cm}|C{2cm}|C{2.5cm}|C{2.5cm}|}
\hline
\textbf{$\delta=10\%$} & \textbf{$\ell$} & \textbf{$p$} & $d$ &\textbf{PHG weight $\lambda_1$} & \textbf{TG weight $\lambda_2$}\\
\hline
Kite      & $0.8$ & $2$  & $(1,0)$ & $200$ & $550$ \\
\hline  
Rectangle & $0.3$    & $2$  &$(1,0)$ & $150$  & $510$  \\
\hline
5-star     & $0.25$  & $2$ & $(1,0)$      $(0,1)$&$1$ & $5$  \\
\hline
\end{tabular}

\captionsetup{justification=centering,singlelinecheck=false}
\caption{Reconstruction parameter for three different obstacles when $\delta=10\%$.}x
\label{tab:delta6}
\end{table}

\begin{figure}[htbp!]
    \centering
    \includegraphics[width=0.4\linewidth]{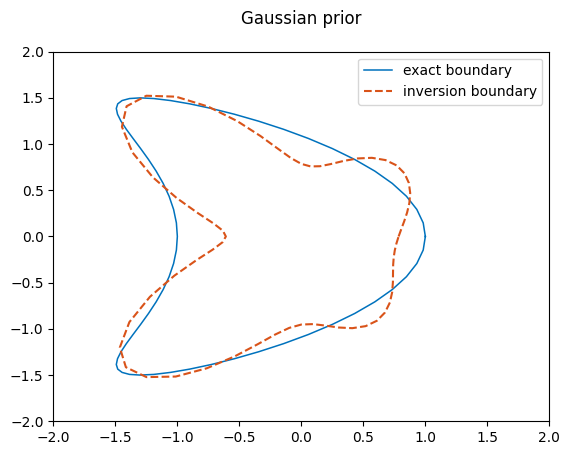}
    \includegraphics[width=0.4\linewidth]{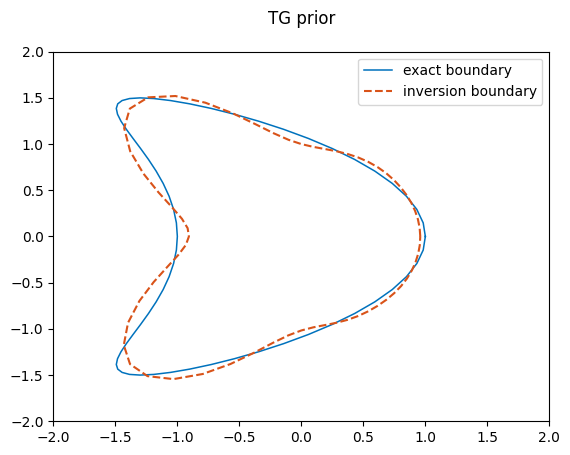}\\
    \includegraphics[width=0.4\linewidth]{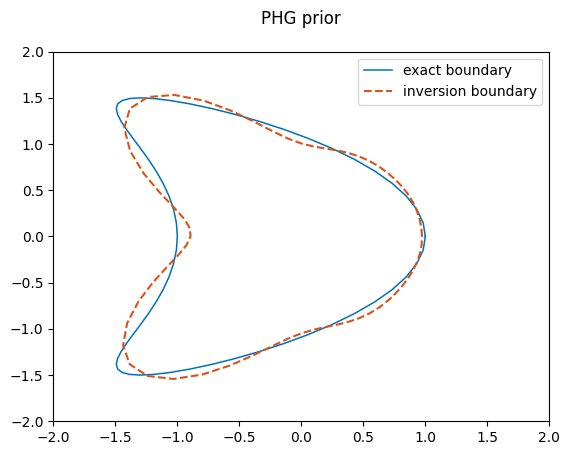}
    \includegraphics[width=0.38\linewidth]{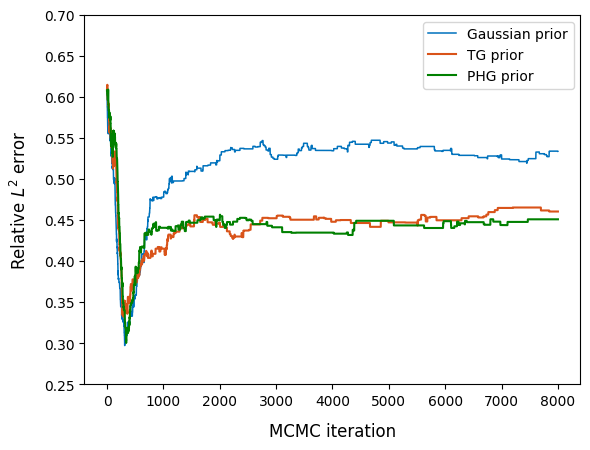}
    \caption{Reconstruction results for the kite shaped obstacle at noise level \(\delta=5\%\). The first three subfigures show the exact and reconstructed boundaries under the Gaussian, TG, and PHG priors, respectively. The last subfigure shows the relative \(L^2\) error over the pCN-MCMC iterations.}
 \label{fig:kite0.05_compare}
\end{figure}
\begin{figure}[b!]
    \centering
    \includegraphics[width=0.4\linewidth]{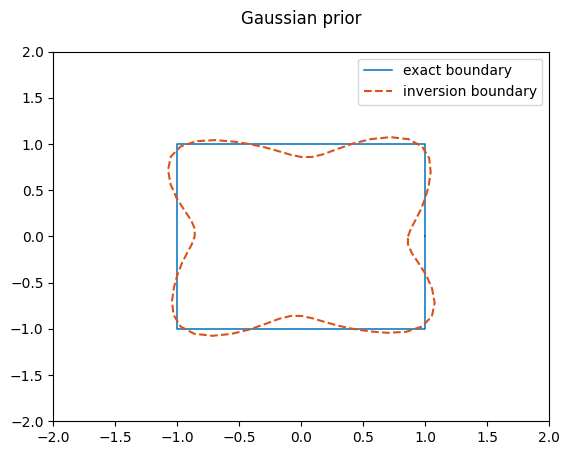}
    \includegraphics[width=0.4\linewidth]{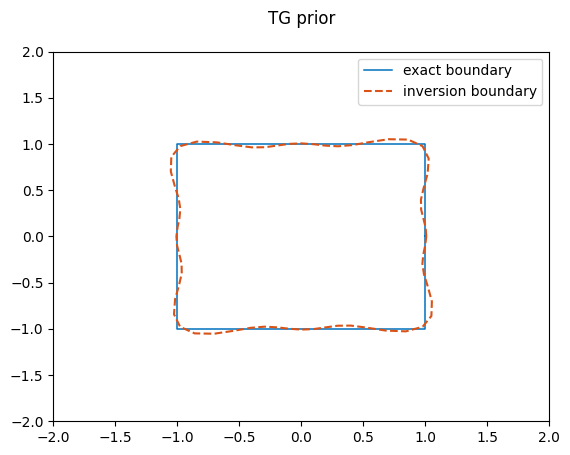}
    \includegraphics[width=0.4\linewidth]{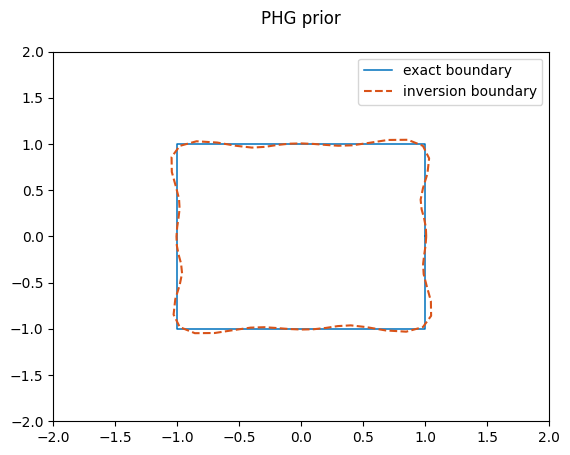}
    \includegraphics[width=0.38\linewidth]{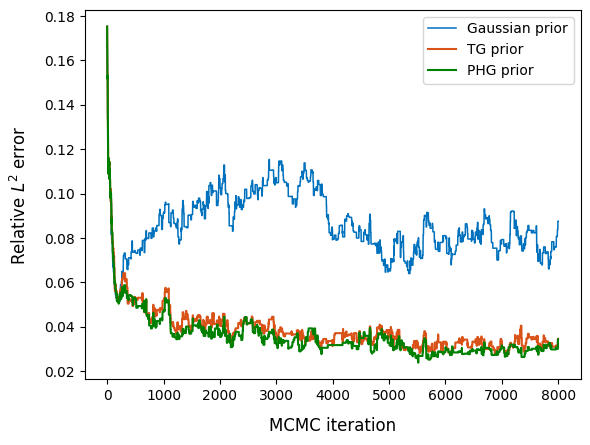}
    \caption{Reconstruction results for the rectangle shaped obstacle at noise level \(\delta=5\%\). The first three subfigures show the exact and reconstructed boundaries under the Gaussian, TG, and PHG priors, respectively. The last subfigure shows the relative \(L^2\) error over the pCN-MCMC iterations.}
 \label{fig:rectangle0.05_compare}
\end{figure}

Secondly, we also show the performance of reconstruction under the higher noise level \(\delta=10\%\), where the inverse problem becomes more challenging and the robustness of the prior plays a more prominent role. \Cref{fig:kite0.1_compare}--\Cref{fig:5star0.1_compare} show the corresponding reconstruction results for the examples of kite, rectangle, and 5-star. In each figure, the first three subfigures demonstrate the comparisons of exact and reconstructed boundaries under the Gaussian, TG, and PHG priors, while the  last subfigure shows the normalized discrete \(L^2\) error over the pCN-MCMC iterations.
\begin{figure}[t!]
    \centering
    \includegraphics[width=0.4\linewidth]{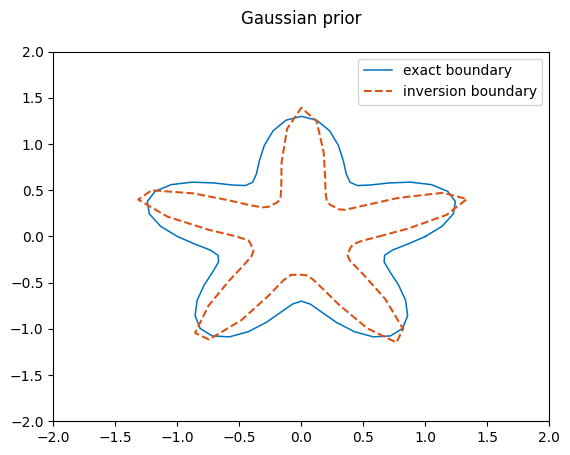}
    \includegraphics[width=0.4\linewidth]{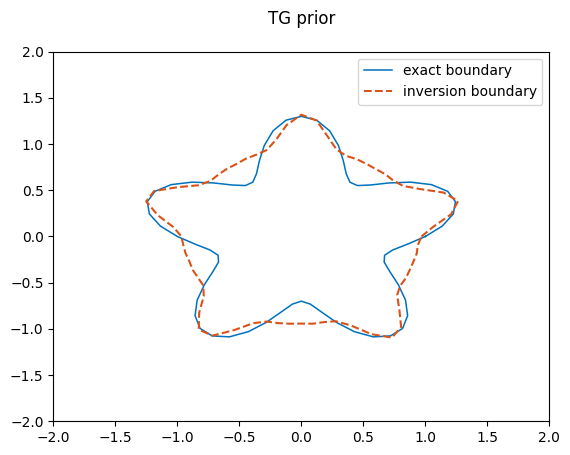}
    \includegraphics[width=0.4\linewidth]{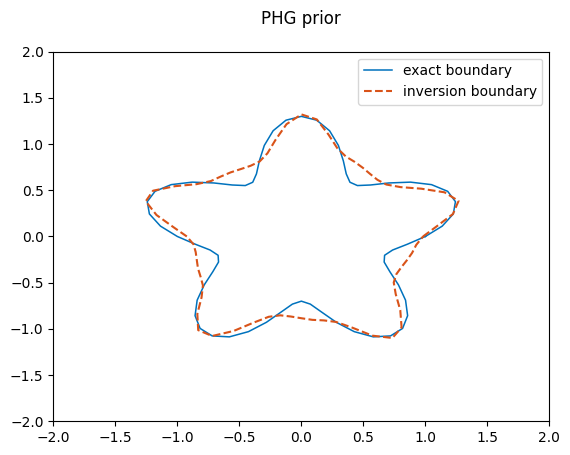}
    \includegraphics[width=0.38\linewidth]{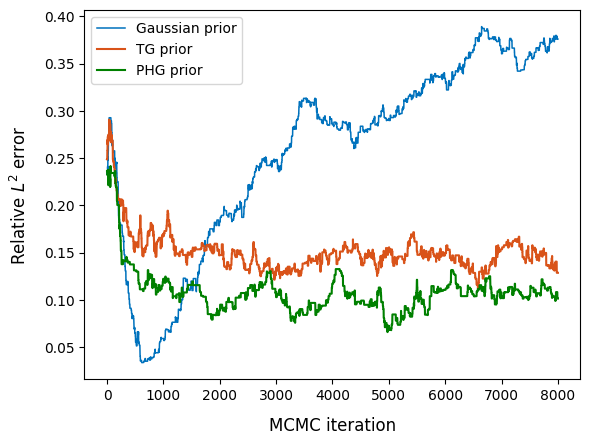}
    \caption{Reconstruction results for the 5-star shaped obstacle at noise level \(\delta=5\%\). The first three subfigures show the exact and reconstructed boundaries under the Gaussian, TG, and PHG priors, respectively. The last subfigure shows the relative \(L^2\) error over the pCN-MCMC iterations.}
 \label{fig:5star0.05_compare}
\end{figure}
\begin{figure}[b!]
    \centering
    \includegraphics[width=0.4\linewidth]{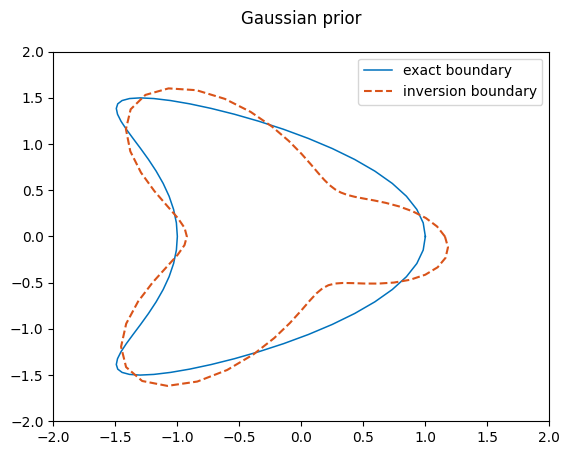}
    \includegraphics[width=0.4\linewidth]{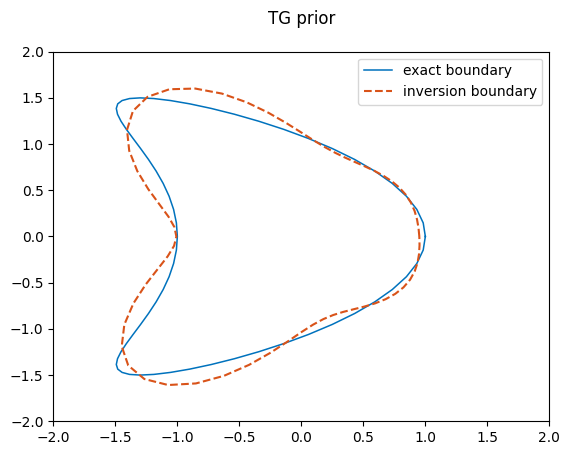}
    \includegraphics[width=0.4\linewidth]{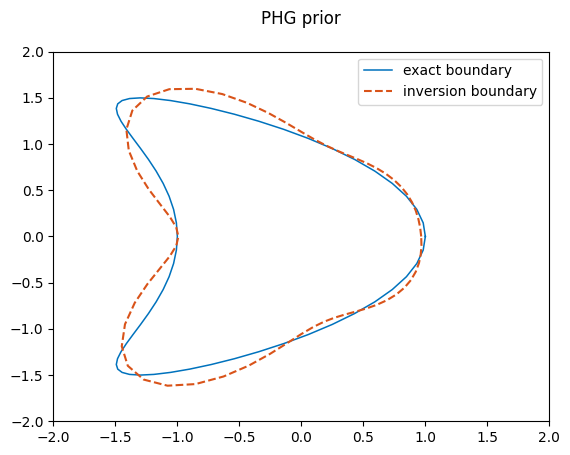}
    \includegraphics[width=0.38\linewidth]{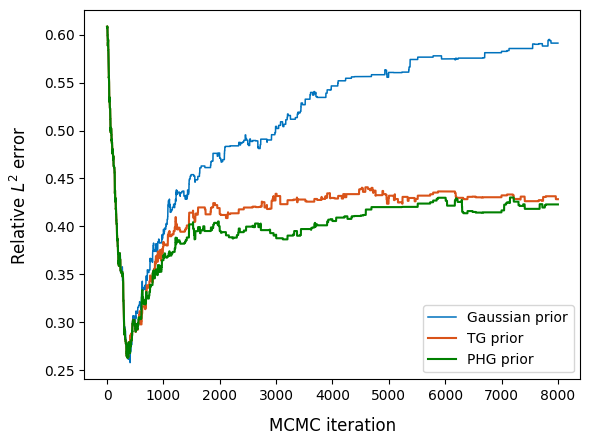}
    \caption{Reconstruction results for the kite shaped obstacle at noise level \(\delta=10\%\). The first three subfigures show the exact and reconstructed boundaries under the Gaussian, TG, and PHG priors, respectively. The last subfigure shows the relative \(L^2\) error over the pCN-MCMC iterations.}
 \label{fig:kite0.1_compare}
\end{figure}
\newpage
\begin{figure}[b!]
    \centering
    \includegraphics[width=0.4\linewidth]{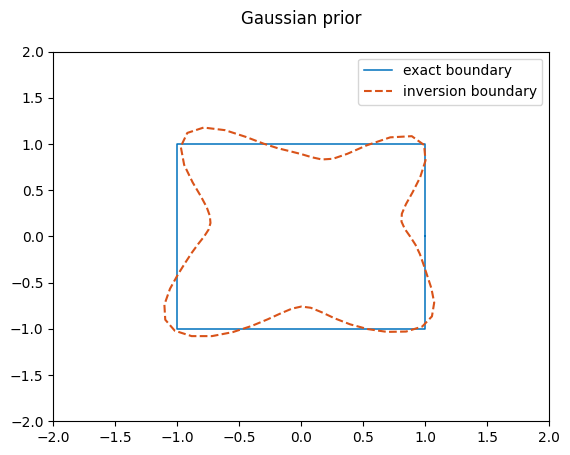}
    \includegraphics[width=0.4\linewidth]{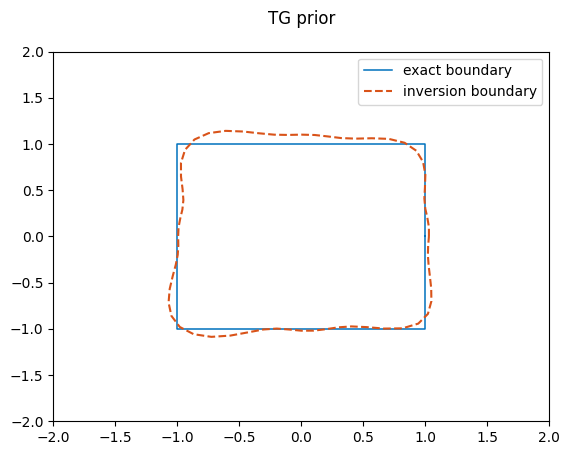}
    \includegraphics[width=0.4\linewidth]{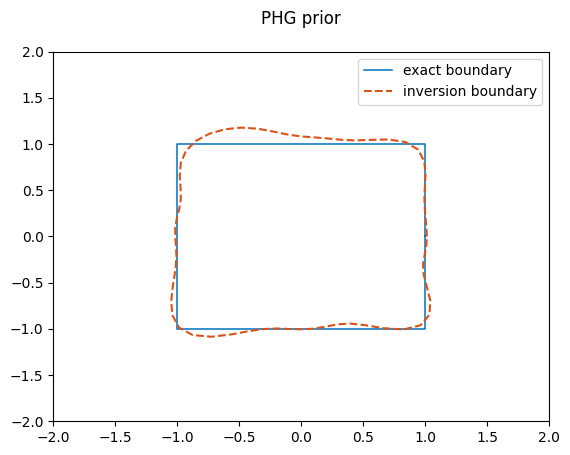}
    \includegraphics[width=0.38\linewidth]{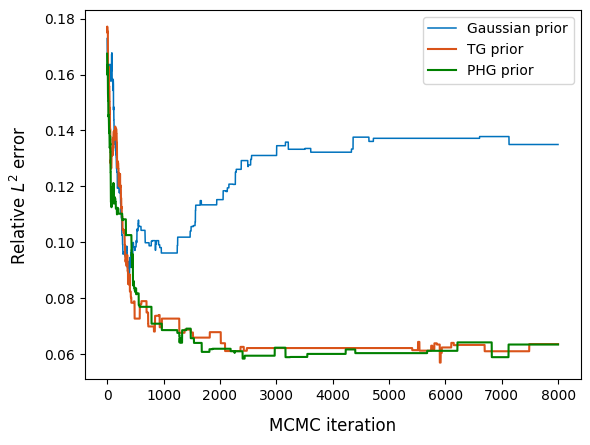}
    \caption{Reconstruction results for the rectangle shaped obstacle at noise level \(\delta=10\%\). The first three subfigures show the exact and reconstructed boundaries under the Gaussian, TG, and PHG priors, respectively. The last subfigure shows the relative \(L^2\) error over the pCN-MCMC iterations.}
 \label{fig:rectangle0.1_compare}
\end{figure}
\begin{figure}[b!]
    \centering
    \includegraphics[width=0.4\linewidth]{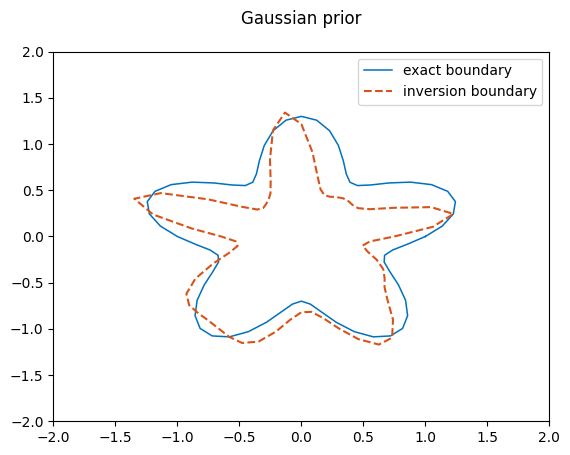}
    \includegraphics[width=0.4\linewidth]{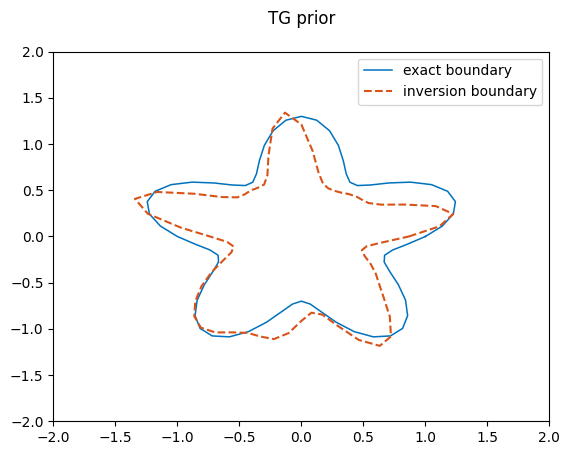}
    \includegraphics[width=0.4\linewidth]{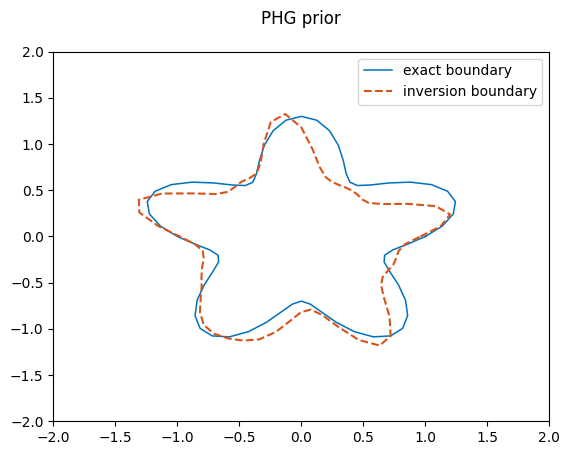}
    \includegraphics[width=0.38\linewidth]{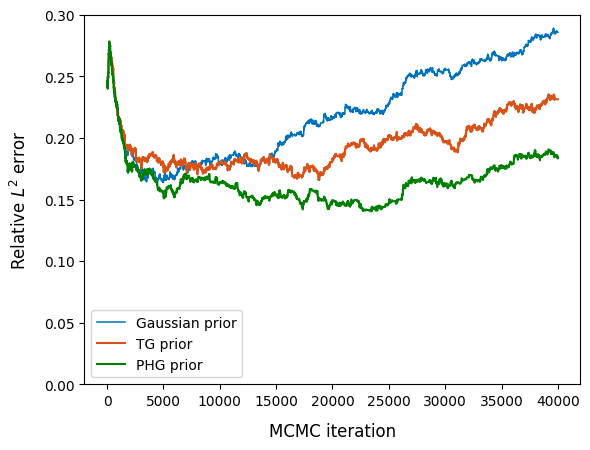}
    \caption{Reconstruction results for the 5-star shaped obstacle at noise level \(\delta=10\%\). The first three subfigures show the exact and reconstructed boundaries under the Gaussian, TG, and PHG priors, respectively. The last subfigures shows the relative \(L^2\) error over the pCN-MCMC iterations.}
 \label{fig:5star0.1_compare}
\end{figure}

With the noise level increasing to \(\delta=10\%\), the PHG prior performs more competitively than the other two priors for different obstacles. The Gaussian prior becomes less accurate at this noise level, resulting in larger deviations from the exact boundary and higher reconstruction errors. In contrast, both the TG and PHG priors remain considerably more stable. 
Furthermore, PHG prior tends to better preserve the main geometric features of the boundary, which yields lower and more stable normalized discrete \(L^2\) errors under higher noise level. Specifically, this result can be found in the 5-star case, suggesting that persistence-based regularization becomes more advantageous as both noise levels and geometric complexity increase.

\subsection{Analysis of different parameters and parameterized boundary}
The parameters in \Cref{tab:delta5} and \Cref{tab:delta6} can be interpreted together with the results in \Cref{fig:kite0.05_compare}--\Cref{fig:5star0.1_compare}. In the periodic kernel \eqref{eq:seckernel}, the parameter \(\ell\) controls the correlation length of the Gaussian reference measure, while \(p\) determines the periodic length of the kernel. The parameter \(\lambda_1\) represents the weight ratio between the persistence-based penalty and the data-misfit term. 
The different combinations of these parameters can effectively deal with  different geometries and noise levels. For the kite-shaped obstacle (see \Cref{fig:kite0.05_compare} and \Cref{fig:kite0.1_compare}), it is characterized by smooth, large-scale variations in the radial direction, and the spurious local fluctuations of reconstructed boundary can be reduced with the increasing of $\ell$. On the other hand, for the rectangle and 5-star shaped obstacles (see \Cref{fig:rectangle0.05_compare} and \Cref{fig:rectangle0.1_compare}, \Cref{fig:5star0.05_compare} and \Cref{fig:5star0.1_compare}), it exhibits sharper local variation and richer extrema. Thus, a smaller \(\ell\) allows greater local flexibility, which is more suitable for resolving sharp corners and oscillatory features. In contrast, a larger \(\ell\) may oversmooth these fine structures. The regularization parameter \(\lambda_1\) plays a critical role in noise suppression, particularly under high-noise conditions. Increasing values of \(\lambda_1\)  will reduce the effect of noise-induced transient persistence pairs in the persistence-based penalty term, thereby improving reconstruction stability. The suitable combinations of parameters (\(\ell\), \(p\), \(\lambda_1\)) in \Cref{tab:delta5} and \Cref{tab:delta6} demonstrate robust performance across varying geometric configurations and noise conditions, which achieve an optimal balance between Gaussian reference measure and persistence-based regularization. Furthermore, the \(\beta\) of the weight function \eqref{eq:alpha-basic} is fixed in the present experiments, which is consistent with a decay parameter of the denoising operation in~\cite{zheng2015application}. This decay parameter helps distinguish noise-induced short-lived persistence pairs from true structure persistence pairs. When this separation is more obvious, the proposed regularization tends to improve reconstruction stability.

\section{Conclusion}\label{se6}
This paper presents a novel persistent-homology-Gaussian (PHG) prior for infinite-dimensional Bayesian inversion in acoustic inverse scattering. The key contributions of this work are threefold: First, a topology-aware prior encodes shape-dependent features through periodic radial parameterization, which enhances computational efficiency and robustness. Second, theoretical analysis of posterior well-posedness has been derived under Hellinger, total variation, and Wasserstein-$p$ metrics, where persistence-based regularization preserves essential boundary features and suppresses noise artifacts. Third, the function space is extended to generalized topological spaces with practical finite-dimensional approximation. Compared to traditional TV-based priors operating in $BV$-type spaces, the PHG prior is gradient-free, which can be regarded as a weighted TV prior in the one-dimensional case. However, it differs significantly from the TV prior in higher dimensions, which can capture higher-order geometric complexity through persistent homology filtrations. Numerical experiments demonstrate superior performance, particularly in challenging scenarios involving high noise levels, sharp transitions, and oscillatory boundaries.

Although this study employs certain simplifying hypotheses, the proposed PHG prior and its associated computational framework demonstrate broad applicability to inverse problems involving unknowns with pronounced oscillatory behavior or shape-dependent features, particularly in scenarios of noisy measurements and discontinuous structures. Future work will focus on the extension of more effective persistence pair weighting strategies to enhance the encoding of complex topological features in higher dimensions, including multiscale phenomena and heterogeneous geometric configurations.

\section*{Acknowledgement}
The work was supported by the National Natural Science Foundation of China (No. 12271409), the Foundation of National Key Laboratory of Computational Physics, and the Fundamental Research Funds for the Central Universities.
\section*{Data availability}
Data will be made available on reasonable request.
\section*{Conflict of interest}
The authors declare that they have no conflict of interest.
\appendix

\bibliography{refs}

\end{document}